\documentclass[a4paper,UKenglish,cleveref, autoref, thm-restate]{lipics-v2021}

\newcommand{\NP}{{\sf NP}}

\title{On Detecting $H$-Induced Minors for Small $H$}
\titlerunning{On Detecting $H$-Induced Minors for Small $H$}

\author{Tala Eagling-Vose}{Department of Computer Science, Durham University, Durham, UK}{tala.j.eagling-vose@durham.ac.uk}{https://orcid.org/0009-0008-0346-7032}{}
\author{Barnaby Martin}{Department of Computer Science, Durham University, Durham, UK}{barnaby.d.martin@durham.ac.uk}{https://orcid.org/0000-0002-4642-8614}{supported by the Leverhulme Trust (RPG-2024-182).}
\author{Dani\"el Paulusma}{Department of Computer Science, Durham University, Durham, UK}{daniel.paulusma@durham.ac.uk}{https://orcid.org/0000-0001-5945-9287}{supported by the Leverhulme Trust (RPG-2024-182).}
\author{Nicolas Trotignon}{ENS de Lyon, CNRS, Lyon, France}{nicolas.trotignon@ens-lyon.fr}{https://orcid.org/0000-0003-1978-0687}{}

\authorrunning{T. Eagling-Vose, B. Martin, D. Paulusma and N. Trotignon}

\Copyright{Tala Eagling-Vose, Barnaby Martin, Dani\"el Paulusma and Nicolas Trotignon}

\ccsdesc[500]{Mathematics of computing~Graph theory}
\ccsdesc[500]{Theory of computation~Graph algorithms analysis}
\ccsdesc[500]{Theory of computation~Problems, reductions and completeness}

\keywords{induced minor \and complexity dichotomy}

\category{}
\relatedversion{}
\acknowledgements{This work has benefitted from Dagstuhl Seminars 22481 and 25041, in which the second, third and fourth author participated. In particular, one of the results resolves the open problem asked in~\cite{CMPSA23}. The authors also thank Jungho Ahn for initial discussions on the problem.}

\nolinenumbers
\hideLIPIcs  

\EventEditors{John Q. Open and Joan R. Access}
\EventNoEds{2}
\EventLongTitle{42nd Conference on Very Important Topics (CVIT 2016)}
\EventShortTitle{CVIT 2016}
\EventAcronym{CVIT}
\EventYear{2016}
\EventDate{December 24--27, 2016}
\EventLocation{Little Whinging, United Kingdom}
\EventLogo{}
\SeriesVolume{42}
\ArticleNo{23}

\begin{document}
\maketitle

\begin{abstract}
We consider the {\sc $H$-Induced Minor} problem: for a fixed graph~$H$, decide whether a given graph $G$ contains $H$ as an induced minor. While the problem is known to be \NP-complete for some trees~$H$ on more than $2^{300}$ vertices, the complexity for small trees remains unresolved. In particular, the case where $H$ is the $7$-vertex tree consisting of a path on five vertices with a pendant vertex attached to the second and fourth vertex was a long-standing open problem. We show that this case is polynomial-time solvable by developing algorithms that detect a sequence of carefully chosen substructures. Complementing this, we prove that  detecting some of these substructures individually is \NP-hard. We also give polynomial-time algorithms for three cases where $H$ is a graph on five vertices (that is not a tree). In this way, we completed the classification of $H$-{\sc Induced Minor} for graphs $H$ on five vertices and answered an open problem of Dallard, Dumas, Hilaire and Perez~(2025).
\end{abstract}

\section{Introduction}\label{s-intro}

For a fixed graph~$H$, we study the {\sc $H$-Induced Minor} problem. This problem asks whether a given graph~$G$ contains $H$ as an {\it induced minor}; that is, whether $G$ can be transformed into~$H$ through a sequence of vertex deletions and edge contractions. The problem thus belongs to the broader class of graph modification problems, a central topic in both algorithmic and structural graph theory. In this setting, the goal is to determine whether a graph $G$ can be transformed into a graph from a specified family~${\cal H}$ using a sequence of operations from a fixed set~$S$. For example, if ${\cal H}=\{H\}$, but $S$ consists of vertex deletion, edge contraction {\it and} edge deletion, then we obtain the classical {\sc $H$-Minor} problem. If ${\cal H}=\{H\}$ and $S$ consists of edge contraction, then we obtain another well-known problem, namely $H$-{\sc Contraction}.

It is well known that {\sc $H$-Minor} can be solved in cubic time for every graph~$H$~\cite{RS95} (see~\cite{KPS24} for an almost-linear time algorithm). However, the problem $H$-{\sc Contraction} is far from being classified and is known to be \NP-complete even for small graphs $H$, such as the $4$-vertex path $P_4$~\cite{BV87} (see~\cite{LPW08,LPW08b,HKPST12} for further partial results).
In contrast to $H$-{\sc Minor} and similar to $H$-{\sc Contraction},
Fellows, Kratochv\'il, Middendorf and Pfeiffer~\cite{FKMP95} showed that {\sc $H$-Induced Minor} can also be \NP-complete. As an example, they provided a non-planar graph~$H$ on $68$ vertices. This is still the smallest known graph for which the problem is \NP-complete.
They also asked whether there exists a {\it planar} graph~$H$ for which {\sc $H$-Induced Minor} is \NP-complete. This question led to a series of follow-up results, which we describe below, including several recent developments.

\medskip
\noindent
{\bf Known Results.}
Even classifying the complexity of {\sc $H$-Induced Minor} is challenging. The problem was shown to be polynomial-time solvable for all trees~$H$ with at most seven vertices except when $H$ is the graph~$\mathbb{H}_2$ shown in Figure~\ref{fig:h2}~\cite{FKP12}. It is also polynomial-time solvable for all subdivided stars~$H$ and for all double stars~$H$, in which one of the two centre vertices has at most two leaves~\cite{FKP12}.
 
The polynomial-time results for star-like trees from~\cite{FKP12} were recently extended to generalized bulls, generalized houses,
complete split graphs with clique number at most~$4$, 
and to flowers by Dallard, Dumas, Hilaire and Perez~\cite{DDHP25} (a flower is an intersection of paths, cycles and diamonds in one vertex; for instance, a subdivided star is a flower). Moreover,
Bousquet et al.~\cite{BDDHMPT26} recently proved polynomial-time solvability of {\sc $H$-Induced Minor} if $H$ is the $4$-vertex wheel, the complete graph on five vertices minus an edge, or the complete bipartite graph~$K_{2,\ell}$ for all $\ell\geq 1$.

By using the above results and resolving remaining cases, Dallard et al.~\cite{DDHP25} proved polynomial-time solvability for all graphs~$H$ on at most five vertices except for the three open cases
displayed in Figure~\ref{fig:h2}.
In addition, Nguyen, Scott and Seymour~\cite{NSS24} gave a polynomial-time algorithm for the case $H=tC_3$ (disjoint union of $t$ triangles) for all $t\geq 2$; we note that the case $t=2$ was a long-standing open problem; see e.g.~\cite{FKP12,FKP13}.

In contrast to all these new polynomial-time results, Korhonen and Lokshtanov~\cite{KL24} recently answered the open problem of~\cite{FKMP95} by proving that there exists even a tree~$H$ (with more than $2^{300}$ vertices) for which {\sc $H$-Induced Minor} is \NP-complete.
Afterwards, Aboulker, Bonnet, Picavet and Trotignon~\cite{ABPT25} solved an open problem of Korhonen and Lokshtanov~\cite{KL24} by  
proving \NP-completeness for some specific subcubic graph~$H$ on $74$ vertices, in which every edge is incident to a vertex of degree~$2$.

From the above survey, it is readily seen that despite all the recent results, there are still many infinite families of graphs~$H$ for which the complexity of {\sc $H$-Induced Minor} is unknown. Therefore, the problem has also been studied on special graph classes. In particular, it is known that for every fixed graph $H$, {\sc $H$-Induced Minor} is polynomial-time solvable for AT-free graphs~\cite{GKP13}, chordal graphs~\cite{BGHHKP14}, $P_t$-free graphs, for all fixed $t\geq 1$~\cite{DDHP25}, planar graphs~\cite{FKMP95} or, more generally, all minor-closed graph classes that do not contain all graphs~\cite{HKPST12}; see~\cite{FKP13} for additional polynomial-time results for claw-free graphs.

\begin{figure}[t]
    \centering    \includegraphics[width=0.6\linewidth,page=1]{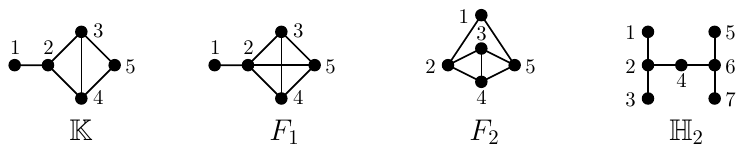}
    \caption{From left to right: the kite $\mathbb{K}$, the graphs $F_1$ and $F_2$, and the graph $\mathbb{H}_2$.}
    \label{fig:h2}
\end{figure}

\medskip
\noindent
{\bf Our Results.} 
We first prove in Sections~\ref{s-diamondplus}--\ref{s-f2} that $H$-{\sc Induced Minor} is polynomial-time solvable for 
every $H\in \{\mathbb{K},F_1,F_2\}$, which are the first three graphs displayed in Figure~\ref{fig:h2}. 
As a common technique, we first analyse the structure of a minimum induced minor model in each of these three cases, and we use our insights to reduce the problem to a polynomial number of instances of the {\sc $k$-Disjoint Subgraphs} problem, one of the main pillars in Graph Minor theory~\cite{RS95}.
Our results answer the open problem of Dallard et al.~\cite{DDHP25} and combined with the aforementioned existing results (see also~\cite{DDHP25}) leads to the following classification:

\begin{theorem}\label{t-dich5}
For every graph~$H$ on at most five vertices, {\sc $H$-Induced Minor} is polynomial-time solvable.
\end{theorem}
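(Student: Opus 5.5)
The theorem will follow by combining the prior classification with polynomial-time algorithms for the three remaining graphs. Dallard, Dumas, Hilaire and Perez~\cite{DDHP25} already show that $H$-{\sc Induced Minor} is polynomial-time solvable for every graph $H$ on at most five vertices except $H\in\{\mathbb{K},F_1,F_2\}$. That result relies on \cite{FKP12} for small trees and subdivided stars, on \cite{BDDHMPT26} for the $4$-wheel and $K_5$ minus an edge, and on their own results for generalized bulls, houses, complete split graphs and flowers. It therefore suffices to give, in Sections~\ref{s-diamondplus}--\ref{s-f2}, polynomial-time algorithms for $H=\mathbb{K}$, $H=F_1$ and $H=F_2$. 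We will use one common template for all three.

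For a fixed $H\in\{\mathbb{K},F_1,F_2\}$ and input graph $G$, we first consider an induced minor model of $H$ in $G$. This is a collection of pairwise disjoint connected branch sets $\{X_h\}_{h\in V(H)}$ such that $X_h$ and $X_{h'}$ are adjacent exactly when $hh'\in E(H)$. Among all such models we choose a \emph{minimal} one, minimizing the total size $\sum_h |X_h|$. The key structural step is to show that in a minimal model all but a constant number of branch sets can be replaced by a bounded-size "core". The core consists of one or two vertices per branch set, or the endpoints of the adjacency edges between branch sets. The remaining branch sets can then be taken to be induced paths, or trees with at most three leaves, joining prescribed terminals. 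Minimality should also force the interior vertices of these paths to have no neighbours in the non-adjacent branch sets apart from unavoidable ones. This argument is done separately for each $H$: high-degree vertices of $H$ (the apex of the kite, for instance) get a single-vertex or small branch set, and low-degree vertices get path-like branch sets.

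Given this structure, the algorithm guesses the constant-size core in $n^{O(1)}$ ways. We delete from $G$ every vertex adjacent to parts of the core it must not touch, so that the non-adjacency constraints are enforced locally. We then ask whether the required connecting paths or trees exist as pairwise disjoint connected subgraphs with prescribed terminals. This is an instance of {\sc $k$-Disjoint Subgraphs} with constant $k$, which is solvable in polynomial time by Robertson and Seymour~\cite{RS95}. A yes-answer yields a model because the deletions guarantee the required non-adjacencies between different branch sets. Conversely, a minimal model survives the guessing step, since it is consistent with one of the guesses.

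The main obstacle is the non-adjacency requirement. Disjoint-paths algorithms guarantee only vertex-disjointness, whereas an induced minor model also needs the branch sets of non-adjacent vertices of $H$ to be anticomplete to each other. We intend to handle this through the minimality argument together with the guessed core. We would show that whenever two guessed path-like branch sets that should be anticomplete have an edge between them, one can reroute or shortcut to obtain either a smaller model or a different model of $H$. If that fails, we would show the offending edges can be confined to neighbourhoods of a bounded number of guessed vertices and removed by deleting those neighbourhoods. This case analysis, which differs for $\mathbb{K}$, $F_1$ and $F_2$, is where we expect most of the work to lie.
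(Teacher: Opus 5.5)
Your top-level reduction is the paper's: Theorem~\ref{t-dich5} is the classification of Dallard et al.\ plus polynomial-time algorithms for $\mathbb{K}$, $F_1$, $F_2$. Those algorithms do follow your template: take a minimum model, guess boundedly many vertices and edges, delete neighbourhoods, and solve a $c$-bounded instance of {\sc $k$-Disjoint Subgraphs} by Theorem~\ref{t-kdisjoint}. The gap is that everything beyond the citation is left as a hope. Moreover, the mechanism you propose for what you rightly call the main obstacle would not work.

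The disjoint-subgraphs oracle returns arbitrary disjoint connected sets. Minimality is a property of a hypothetical optimal model, not of the oracle's output, so you cannot afterwards ``reroute or shortcut'' to remove edges between two returned sets that ought to be anticomplete. The paper never faces this. In each case the bags handed to the oracle correspond to a clique of $H$: $\{2,3,4\}$ in $\mathbb{K}$, $\{2,3,4,5\}$ in $F_1$, $\{3,4,5\}$ in $F_2$. So the oracle only has to deliver disjointness and connectivity, with adjacency forced by guessed edges. Every non-edge of $H$ involves a singleton bag ($X_1,X_5$ for $\mathbb{K}$; $X_1$ for $F_1$; $X_1,X_2$ for $F_2$) and is enforced by deleting that singleton's neighbourhood. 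Note that the singletons are the pendant and degree-$2$ vertices of $\mathbb{K}$ and $F_1$, and in $F_2$ the subdividing vertex and one of its neighbours. They are not the high-degree vertices your heuristic suggests; in $F_1$ the dominating vertex carries the large bag.

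Deleting a singleton's neighbourhood is only safe if the bags that must touch it contain few of its neighbours, and this is the real content you have not supplied. Lemmas~\ref{l:diamond-bags} and~\ref{l:k4-sub} prove it by explicit exchange arguments on a minimum model with extra tie-breaking. For $\mathbb{K}$ they minimise $|X_5|$ and then $|N(X_5)\cap X_3|+|N(X_5)\cap X_4|$. For $F_2$ they minimise $|X_1|$, $|X_2|$, $|X_5|$ in turn. This gives, for $\mathbb{K}$, at most $3$ neighbours of $u$ in $X_2$ and at most $2$ of $v$ in each of $X_3,X_4$. For $F_2$ it gives at most $2$ neighbours of $u$ in $X_5$ and at most $2$ of $r$ in each of $X_3,X_4$. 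These neighbours are guessed and all others deleted.

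For $F_1$ the paper proves no bound on $|N(u)\cap X_2|$ at all. Lemma~\ref{l:loli} shows instead that $X_2$ meets at most seven components of $G[N(u)]$. The algorithm guesses these components and contracts each to a single terminal of $Z_1$. This is legitimate because vertex~$2$ is adjacent to every other vertex of $F_1$, so $X_2$ may absorb whole components. Your claim that ``a minimal model survives the guessing step'' is exactly what these lemmas are needed for. As written, your argument proves the theorem only modulo the three cases that constitute its substance.
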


\noindent
In Section~\ref{s-h2}, we resolve the only open case of a tree~$H$ on seven vertices (see also~\cite{CMPSA23}). Namely, we prove
that $\mathbb{H}_2$-{\sc Induced Minor} is polynomial-time solvable for the tree $\mathbb{H}_2$ displayed in Figure~\ref{fig:h2} (to explain its name, $\mathbb{H}_2$ is obtained from the graph that looks like the letter ``H'' after subdividing the middle bar once). In addition, we observe that if {\sc $H$-Induced Minor} is polynomial-time solvable, then so is 
{\sc $(H+P_t)$-Induced Minor}. Namely, a graph $G$ has $H+P_t$ as an induced minor if and only if $G-N[S]$ has $H$ as an induced minor for some set~$S$ that induces a $P_t$ in $G$. Since $t$ is a fixed constant, we can branch on all possible options for choosing $S$.
Now, if $H$ is a disconnected forest on at most seven vertices, then one of its connected components is a path. Hence, our results combined with the aforementioned results from~\cite{FKP12} imply the following:

\begin{theorem}\label{t-dich7}
For every forest~$H$ on at most seven vertices, {\sc $H$-Induced Minor} is polynomial-time solvable.
\end{theorem}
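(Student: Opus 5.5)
The plan is to split on whether $H$ is connected, and to lean on three ingredients: the classification of trees on at most seven vertices from~\cite{FKP12}; our polynomial-time algorithm for $\mathbb{H}_2$-{\sc Induced Minor} from Section~\ref{s-h2}; and the reduction for $H+P_t$ observed just above the statement.

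Suppose first that $H$ is connected. Then $H$ is a tree on at most seven vertices. If $H\neq\mathbb{H}_2$, then {\sc $H$-Induced Minor} is polynomial-time solvable by~\cite{FKP12}. If $H=\mathbb{H}_2$, we apply the algorithm of Section~\ref{s-h2}.

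Suppose next that $H$ is disconnected, so it has at least two components. Let $C$ be a smallest component. Since $H$ has at most seven vertices, $|V(C)|\le 3$. Every tree on at most three vertices is a path, so $C=P_t$ with $t\le 3$. Write $H=H'+P_t$, where $H'=H-V(C)$ is a forest on at most six vertices.

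We then argue by induction on $|V(H)|$. By the induction hypothesis, {\sc $H'$-Induced Minor} is polynomial-time solvable; the base case is the empty graph or $P_1$, which is trivial. We then apply the reduction: $G$ contains $H'+P_t$ as an induced minor if and only if there is a set $S\subseteq V(G)$ with $G[S]\cong P_t$ such that $G-N[S]$ contains $H'$ as an induced minor. There are $O(n^t)$ candidate sets $S$, so we make polynomially many calls to the algorithm for $H'$.

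The reduction itself should be checked once. For the forward direction, take an induced minor model of $H$ and let $B_1,\dots,B_t$ be the branch sets of the $P_t$-component. Pick a shortest path inside the connected set $B_1\cup\dots\cup B_t$ between a vertex of $B_1$ and a vertex of $B_t$. This yields an induced path. It may have more than $t$ vertices, but it can be contracted, or shortened to an induced $P_t$ taken inside it. It is anticomplete to the other branch sets, so those branch sets lie in $G-N[S]$. The converse direction is immediate.

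The one step needing care is exactly this forward direction of the reduction. One could worry about it, but it is also avoidable: it suffices to take $S$ to be the whole union $B_1\cup\dots\cup B_t$. Alternatively, one can branch directly on a connected set with an induced $P_t$-minor. Since the paper already states the reduction, I would simply cite it.

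All the real difficulty therefore lies in the $\mathbb{H}_2$ case, which is handled in Section~\ref{s-h2}.
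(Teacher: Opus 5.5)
Your proposal is correct and follows the paper's route: trees are handled by~\cite{FKP12} together with the $\mathbb{H}_2$ algorithm of Section~\ref{s-h2}, and disconnected forests by repeatedly splitting off a path component with the $H+P_t$ branching reduction. Your smallest-component argument ($|V(C)|\le 3$, so $C$ is a path) and your explicit induction fill in details the paper leaves implicit.

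One caution: your aside about branching on the whole union $B_1\cup\dots\cup B_t$, or on arbitrary connected sets, would require exponentially many branches and so would not give a polynomial-time algorithm. Keep the shortest-path argument that extracts an induced $P_t$, which you carry out correctly.
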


\noindent
To detect $\mathbb{H}_2$ as an induced minor, we first detect specific models of it.  If none of these models are present, we show that any model must look like what we call a windmill.  We then rely on a \emph{shortest path detector}, a technique first used in \cite{chudnovsky.c.l.s.v:reco}, to detect a windmill (under the assumption that the specific models of the first step are not present). 
To show that a more naive approach is likely to fail, we show in Section~\ref{s-np} that it is \NP-complete to detect some kinds of windmill individually.

\section{Preliminaries}\label{s-pre}
 
Let $G=(V(G),E(G)$ be a graph.
The {\it neighbourhood} of a vertex $v\in V(G)$
is the set $N_G(v) = \{u \in V(G)\; |\; uv \in E(G)\}$, and the {\it degree} of $v$ is $|N_G(v)|$. 
We write $N[v] = N(v) \cup \{v\}$. For 
$S\subseteq V(G)$, we use
$N(S)=\cup_{u\in V}N(u)$. We write $G[S]$ for the subgraph of $G$ induced by $S$, and say that $S$ is {\it connected} if $G[S]$ is connected.
For $v \in V(G)$ and $S \subseteq V(G)$,
we say that $v$ is {\it complete} to $S$ if $S \subseteq N(v)$, and {\it anti-complete} to $S$ if $S \cap N(v) = \emptyset$. A set $T \subseteq V(G)$ is {\it (anti-)complete} to $S$ if every $v \in T$ is 
{\it (anti-)complete} to $S$. Two disjoint sets $S,T\subseteq V(G)$ are {\it adjacent} if there is an edge with one endpoint in $S$ and the other one in $T$.
Let $F$ be another graph. We write $F\subseteq G$ if $V(F)\subseteq V(G)$ and $E(F)\subseteq E(G)$.

If $u\in V(P)$ and $v\in V(P)$ for some path $P$, then $uPv$ denotes the subpath of $P$ from $u$ to $v$.
A {\it tripod} with {\it ends} $v_1$, $v_2$, $v_3$ is a tree of maximum degree~$3$ with exactly one vertex of degree~$3$ that has $v_1$, $v_2$ $v_3$ as its leaves.  
A {\it triangle tripod} with {\it ends} $v_1$, $v_2$, $v_3$ is obtained by taking three vertex-disjoint paths
$P_i$ with end-vertices $u_i$ and~$v_i$ (with possibly $u_i=v_i$) for $i\in \{1,2,3\}$ and adding the edges $u_1u_2$, $u_1u_3$ and $u_2u_3$.
Note that a triangle is a triangle tripod whose three vertices are the ends.

Let $G$ be a graph that contains a graph $H$ as an induced minor. A set ${\mathcal X} = \{X_y\; |\; y\in V(H)\}$ is an {\it induced minor model}, or more specifically, an {\it $H$-model} in $G$ if:

\begin{itemize}
\item [(i)] for every $y\in V(H)$, $X_y$ is non-empty;
\item [(ii)] for every $y,z\in V(H)$, $X_y\cap X_z=\emptyset$; and
\item [(iii)] for every $y,z\in V(H)$,  $X_y$ and $X_z$ are adjacent if and only if $yz\in E(H)$.
\end{itemize}

\noindent
For $y\in V(H)$, we write $G_y=G[X_y]$.
Note that we obtain $H$ by contracting edges along a spanning tree in every $G_y$ and deleting any vertex that is not in a bag $X_y$ for some $y\in V(H)$. 
Note also that a graph $G$ may have multiple $H$-models for some graph $H$. 

\begin{figure}[t]
    \centering    \includegraphics[width=0.7\linewidth,page=2]{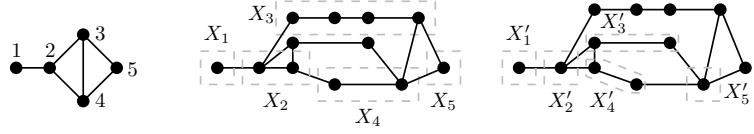}
    \caption{A graph $G$ with two bag-minimal $\mathbb{K}$-models, but only the right model is minimum.}
    \label{fig:minimum}
\end{figure}

For a graph $H$, let ${\cal X}$ and ${\cal X}'$ be  two $H$-models in a graph $G$. We say that ${\cal X}'$ is smaller than
${\cal X}$ if $\sum_{y\in V(H)}|X'_y|<\sum_{y\in V(H)}|X_y|$.
If there is no smaller model than ${\cal X}$, then ${\cal X}$ is a {\it minimum} $H$-model in $G$.
Moreover, ${\cal X'}$ is {\it properly contained} in ${\cal X}$ if for all $y\in V(H)$, it holds that $X'_y\subseteq X_y$. We say that ${\cal X}$ is \emph{bag-minimal} if
there exists no smaller $H$-model ${\mathcal X}'$ that is contained in ${\cal X}$.
While every minimum $H$-model is bag-minimal, the converse does not necessarily hold, see Figure~\ref{fig:minimum} for an example.

Let $G$ be a graph with an $H$-model ${\cal X}$ for some graph $H$.
For $yz\in E(H)$, a vertex in $X_y$ with a neighbour in $X_z$ is an 
{\it $X_z$-attachment (vertex)} in $X_y$. 

The next two lemmas are basic lemmas that are essentially known, but we need them in a specific form and therefore give their proofs as well.

\begin{lemma}\label{l:leaf}
For a graph $H$, let ${\cal X}$ be a 
bag-minimal $H$-model in a graph $G$. Let $y\in V(H)$, and let $T_y$ be a spanning tree $T_y$ of $G_y$. It holds that:
\begin{itemize}
    \item [(i)] $T_y$ has at most $|N_H(y)|$ leaves, and
    every leaf of $T_y$ is the unique $X_z$-attachment vertex in $X_y$ for some $z\in N_H(y)$; and
    \item [(ii)] every inner vertex of the (unique) path 
    from some vertex $u$ to some vertex $w$ in $T_y$ is the unique $X_z$-attachment vertex in $X_y$ for some $z\in N_H(y)$ if $uv\in E(G)$.
\end{itemize}
\end{lemma}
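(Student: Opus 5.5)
The plan is to prove (i) directly from bag-minimality by deleting a leaf of $T_y$, and then to obtain (ii) by applying (i) to a second spanning tree of $G_y$, built from $T_y$ by an edge exchange that uses the edge $uw$. I read the hypothesis of (ii) as ``$uw\in E(G)$''; the ``$uv$'' in the statement looks like a typo. I have only checked (ii) when the inner vertex has degree~$2$ in $T_y$; the general case is the open step.

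For (i), let $\ell$ be a leaf of $T_y$, and suppose $|X_y|\ge 2$ (if $|X_y|=1$ there is nothing to prove). Since $\ell$ is a leaf, $T_y-\ell$ is a spanning tree of $G[X_y\setminus\{\ell\}]$, so this set is connected and non-empty. Suppose, for a contradiction, that $\ell$ is not the unique $X_z$-attachment vertex in $X_y$ for any $z\in N_H(y)$. Define ${\cal X}'$ by replacing $X_y$ with $X_y\setminus\{\ell\}$ and keeping every other bag. We check the three conditions of an $H$-model for ${\cal X}'$:
\begin{itemize}
\item[(a)] Conditions (i) and (ii) hold trivially.
\item[(b)] Non-adjacent pairs of bags stay non-adjacent, because we only deleted a vertex.
\item[(c)] For each $z\in N_H(y)$, the bag $X_y$ contains an $X_z$-attachment vertex other than $\ell$, so $X_y\setminus\{\ell\}$ is still adjacent to $X_z$.
\end{itemize}
Hence ${\cal X}'$ is a smaller $H$-model contained in ${\cal X}$, contradicting bag-minimality. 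So every leaf of $T_y$ is the unique $X_z$-attachment vertex for some $z\in N_H(y)$. Two distinct leaves cannot both be the unique $X_z$-attachment vertex for the same $z$. Choosing one such $z$ for each leaf therefore gives an injection from the leaves of $T_y$ into $N_H(y)$, which bounds the number of leaves by $|N_H(y)|$.

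For (ii), let $P$ be the path in $T_y$ from $u$ to $w$, assume $uw\in E(G)$, and let $v$ be an inner vertex of $P$. Note that $uw\notin E(T_y)$, since $P$ has an inner vertex. Adding $uw$ to $T_y$ creates the unique cycle $P+uw$. Deleting either edge of $P$ incident with $v$ gives a new spanning tree $T'$ of $G_y$ in which $v$ has one fewer neighbour. If $v$ has degree~$2$ in $T_y$, then $v$ is a leaf of $T'$. Since (i) holds for every spanning tree of $G_y$, applying it to $T'$ shows that $v$ is the unique $X_z$-attachment vertex in $X_y$ for some $z\in N_H(y)$.

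The main obstacle is an inner vertex $v$ of degree at least~$3$ in $T_y$. Then the single exchange above does not make $v$ a leaf, and $G_y-v$ need not be connected, so the deletion argument of (i) cannot be applied to $v$ directly. The first thing I would try is to show, using the cycle $P+uw$, that every component of $T_y-v$ hanging off $v$ away from $P$ must contain an attachment vertex that is needed by minimality. This would let me either re-route that component to a neighbour of $v$ on the cycle, reducing $v$ to degree~$2$, or exhibit a smaller contained model directly. If this cannot be done in general, (ii) should be weakened to inner vertices of degree~$2$ in $T_y$, which is all the exchange argument above establishes.
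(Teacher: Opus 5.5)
Your argument for (i), including the injection that bounds the number of leaves, is correct and is the paper's deletion argument. For (ii), your reading $uw\in E(G)$ is right. Your edge exchange is also correct for inner vertices of degree~$2$ in $T_y$: it makes $v$ a leaf of $T_y+uw-e$ and then invokes (i). This is equivalent to what the paper's one-line ``we argue the same'' actually establishes, namely that $v$ can be deleted because $uw$ keeps $X_y\setminus\{v\}$ connected. That deletion silently needs $v$ to have degree~$2$ in $T_y$, for exactly the reason you identify.

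The step you leave open cannot be closed, because (ii) is false in general. Here is a counterexample.
Let $H=K_{1,3}$ with centre $y$ and leaves $z_1,z_2,z_3$. Let $G$ have vertices $u,v,w,t,p_1,p_2,p_3$ and edges $uv,vw,uw,vt,up_1,wp_2,tp_3$. Put $X_y=\{u,v,w,t\}$ and $X_{z_i}=\{p_i\}$.

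This is an $H$-model, and it is bag-minimal:
the singleton bags cannot shrink, so any $X'_y\subseteq X_y$ must keep the unique attachment vertices $u,w,t$. Since $v$ is the only neighbour of $t$ in $X_y$, connectivity then forces $v\in X'_y$.

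Now take the spanning tree $T_y$ with edges $uv,vw,vt$. The $T_y$-path from $u$ to $w$ is $uvw$ and $uw\in E(G)$, yet $v$ has no neighbour outside $X_y$. So $v$ is not an attachment vertex at all. The same example defeats your re-routing idea: the branch $\{t\}$ at $v$ reaches the rest of $X_y$ only through $v$, so it cannot be re-routed.

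So you should adopt your fallback and restrict (ii) to inner vertices of degree~$2$ in $T_y$. More generally, the deletion argument of (i) shows that when $|X_y|\ge 2$, every vertex that is not a cut vertex of $G_y$ is a unique attachment vertex. This weaker version is all the paper ever uses:
\begin{itemize}
\item in Lemma~\ref{l:general}(ii) and in the proof of Lemma~\ref{l:loli}, $T_y$ is a path;
\item in Case~1 of Lemma~\ref{l:general}(iii), the relevant chords lie inside a leg $Q_i$ of the tripod.
\end{itemize}
In each case every inner vertex involved has degree~$2$ in $T_y$.
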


\begin{proof}
To prove (i), if a leaf $u$ of $T_y$ is not the unique $X_z$-attachment vertex in $X_y$ for some $z\in N_H(y)$, then the set
${\cal X}'$ with $X'_y=X_y\setminus \{u\}$ and $X'_z=X_z$ for all $z\in V(H)\setminus \{y\}$ is a smaller $H$-model in $G$ contained in ${\cal X}$, a contradiction. To prove~(ii), we argue the same.
\end{proof}

\begin{lemma}\label{l:general}
For a graph $H$ and a graph $G$ with 
a bag-minimal $H$-model ${\cal X}$, the following holds for every $y\in V(H)$: 
\begin{itemize}
\item [(i)] If $y$ has exactly neighbour $z$ in $H$, then $|X_y|=1$. 
\item [(ii)] If $y$ has exactly two neighbours $z_1$ and $z_2$ in $H$, then $G_y$ is a path (of possibly one vertex) from a unique $X_{z_1}$-attachment vertex to a unique $X_{z_2}$-attachment vertex in $X_y$.
\item [(iii)] If $y$ has exactly three neighbours $z_1$, $z_2$ and $z_3$ in $H$, then either:
\begin{itemize}
\item  $G_y$ is a path between a unique $X_{z_h}$-attachment and a unique $X_{z_i}$-attachment for some $h,i\in \{1,2,3\}$ with $h\neq i$; or
\item $G_y$ consists of a single vertex; or,
\item  $G_y$ is a tripod or triangle tripod whose ends are the unique $X_{z_i}$-attachment vertices for $i\in \{1,2,3\}$, respectively. 
\end{itemize}
In particular, if there is $j\in \{1,2,3\}$ with $|N(X_{z_j}) \cap X_y| \geq 2$, then $G_y$ is a path between a unique $X_{z_h}$-attachment and a unique $X_{z_i}$-attachment for $h,i\in \{1,2,3\}\setminus \{j\}$ with $h\neq i$.
\end{itemize}
\end{lemma}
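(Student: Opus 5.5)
The plan is to use Lemma~\ref{l:leaf} together with one minimality observation: deleting a vertex $u$ from a bag $X_y$ yields a smaller model contained in ${\cal X}$ unless either $X_y\setminus\{u\}$ becomes empty or disconnected, or some adjacency $X_y$--$X_z$ with $z\in N_H(y)$ is lost. Non-adjacencies are never created by deleting vertices, so these are the only obstructions. Throughout I fix a spanning tree $T_y$ of $G_y$.

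For (i), suppose $|X_y|\geq 2$. Then $T_y$ has at least two leaves, and by Lemma~\ref{l:leaf}(i) each leaf is the \emph{unique} $X_z$-attachment vertex for some $z\in N_H(y)=\{z\}$. Two distinct leaves cannot both be the unique $X_z$-attachment, a contradiction. Hence $|X_y|=1$.

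For (ii), every spanning tree of $G_y$ has at most two leaves by Lemma~\ref{l:leaf}(i), so it is a path $P$ (or a single vertex). Its two end-vertices are the unique $X_{z_1}$- and the unique $X_{z_2}$-attachment vertex, and they must be different neighbours, again by uniqueness. It remains to show that $G_y$ has no chord, so that $G_y=P$. Suppose $uw$ is a chord with $u,w$ non-consecutive on $P$. Replacing one edge of $uPw$ by $uw$ gives a spanning tree with an inner vertex of $uPw$ as a leaf. That vertex is not an end of $P$, so it is neither unique attachment vertex, contradicting Lemma~\ref{l:leaf}(i). Alternatively, one can delete the interior of $uPw$ directly, using Lemma~\ref{l:leaf}(ii).

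For (iii), $T_y$ has at most three leaves, so it is a single vertex, a path, or a tripod whose leaves are the three distinct unique attachments. The main obstacle is controlling the non-tree edges of $G_y$.
\begin{itemize}
\item \textbf{Path case.} If $T_y$ is a path with unique $X_{z_h}$- and $X_{z_i}$-attachment ends, any chord $uw$ yields, by edge swapping, a spanning tree with an extra leaf inside $uPw$. That leaf must be the unique $X_{z_j}$-attachment for the third index $j$. I then argue that the whole interior of $uPw$ can be shortcut. If $G_y$ is still not a path, a spanning tree of tripod or triangle-tripod form results, which moves us to the next case. So either $G_y$ is an induced path, or we are in the tripod case.
\item \textbf{Tripod case.} Take $T_y$ with centre $c$ and legs to the ends $v_1,v_2,v_3$. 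Every vertex that is neither an end nor $c$ lies on a leg. A chord between two legs, or within one leg, would allow swapping to a spanning tree with a leaf that is not a unique attachment, unless the chord is between the neighbours of $c$ on the legs. Chords of that kind are exactly what produce the triangle tripod: if $c$ has neighbours $u_1,u_2,u_3$ with edges among them, deleting $c$ keeps $G_y$ connected and keeps all attachments. This contradicts minimality unless $c$ is needed, and it yields the triangle on the $u_i$ once $c$ is removed.
\end{itemize}
Carefully enumerating which chords survive minimality is the tedious step. My approach is to choose, among all spanning trees, one minimising the number of non-tree edges in a suitable order, and then apply Lemma~\ref{l:leaf}(ii) to each chord.

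For the final statement, suppose $|N(X_{z_j})\cap X_y|\geq 2$. Then $X_y$ has no unique $X_{z_j}$-attachment vertex, so no leaf of any spanning tree can serve as the $X_{z_j}$-attachment. The single-vertex case is also excluded, since it would give exactly one attachment. The tripod and triangle-tripod cases are excluded because there every end is a unique attachment and there are three ends. So $G_y$ is a path whose ends are the unique attachments for the other two indices $h,i\in\{1,2,3\}\setminus\{j\}$.
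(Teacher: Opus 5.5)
\textbf{Verdict: there is a genuine gap in part (iii).} Your overall strategy is the paper's: apply Lemma~\ref{l:leaf} to well-chosen spanning trees of $G_y$. Several pieces are fine as written: parts (i) and (ii), the final ``in particular'' claim, and the exchange argument showing that, for a tripod spanning tree with centre $c$, every non-tree edge joins two of the neighbours $u_1,u_2,u_3$ of $c$ on the legs. The problem is in how you identify the final structure in (iii).

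\emph{Tripod case.} Your claim that any edge among $u_1,u_2,u_3$ lets you delete $c$ is false. With a single edge $u_1u_2$, removing $c$ disconnects the leg through $u_3$. The correct finish is as follows. Two or more edges among the $u_i$ make $c$ deletable, since $c$ is not an attachment vertex; this contradicts bag-minimality. So there is at most one such edge. A single edge $u_iu_j$ gives a triangle tripod whose triangle is $\{c,u_i,u_j\}$. It is not ``the triangle on the $u_i$ once $c$ is removed'', which would itself contradict bag-minimality.

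\emph{Path case.} You end with ``a spanning tree of tripod or triangle-tripod form results'', but a triangle tripod is not a tree. More importantly, when $G_y$ is a cycle, no spanning tree has three leaves, so you never reach the tripod case. This case must be handled directly. Every vertex of the cycle is a leaf of some spanning path, hence a unique attachment vertex. So $|X_y|\le 3$ and $G_y$ is a triangle.

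\emph{Choice of spanning tree.} Your proposed rule ``minimise the number of non-tree edges'' is vacuous. Every spanning tree of $G_y$ has exactly $|E(G_y)|-|X_y|+1$ non-tree edges.

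The clean fix is the paper's choice of tree. Take $T_y$ to contain a vertex of degree at least~$3$ whenever $G_y$ has one, for instance a BFS tree rooted at that vertex. Then there are two cases. Either $T_y$ has three leaves and the corrected tripod analysis applies. Or $G_y$ has maximum degree at most~$2$; then, by your chord-exchange argument from (ii), it is an induced path or a cycle, and the cycle is forced to be a triangle. With these corrections your argument proves the lemma.
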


\begin{proof}
To prove (i), we apply Lemma~\ref{l:leaf}-(i). To prove (ii), we first apply Lemma~\ref{l:leaf}-(i) to find that $G_y$ has a spanning path $T_y$ from a unique $X_{z_1}$-attachment vertex in $X_y$
to a unique $X_{z_2}$-attachment vertex in $X_y$. Next, we apply Lemma~\ref{l:leaf}-(ii) to find that $G_y=T_y$.

To prove (iii), assume that $G_y$ has more than one vertex. Let $T_y$ be a spanning tree of~$G_y$. 
We choose $T_y$ to have a vertex of degree at least~$3$
if $G_y$ has a vertex of degree at least~$3$; else $T_y$ is a path.
We distinguish between the following two cases:

\medskip
\noindent
{\bf Case 1.} $T_y$ has at least three leaves.\\
As $y$ has degree~$3$ in $H$, Lemma~\ref{l:leaf}-(i) tells us that
$T_y$ is a tripod whose three leaves $u_i$ are the unique $X_{z_i}$-attachment vertices for $i\in \{1,2,3\}$.
Let $v$ be the vertex of degree~$3$ in $T_y$.
Let $Q_i$ be the path from $u_i$ to $v$ in $T_y$.
 From Lemma~\ref{l:leaf}-(ii), it follows that $Q_1$, $Q_2$, $Q_3$ are induced paths in $G$. Suppose $e=st$ is an edge of $G$ with $s\in V(Q_1)$ and $t\in V(Q_2)$. If one of $s,t$, say $s$, is not a neighbour of $v$, then we can safely delete all inner vertices of $sQ_1v$, a contradiction. Suppose $s$ and $t$ are neighbours of $v$. 
 If one of $s$, $t$, say $s$, is adjacent to the neighbour of $u$ (if it exists) on $Q_3$, then we can safely delete $v$, another contradiction.
 Hence, $G$ is a tripod or triangle tripod with ends $u_1$, $u_2$, $u_3$.

\medskip
\noindent
{\bf Case 2.} $T_y$ has exactly two leaves.\\
From Lemma~\ref{l:leaf}-(ii), it follows that
$T_y$ 
 is a path between a unique $X_{z_h}$-attachment $u_1$ and a unique $X_{z_i}$-attachment $u_2$ for some $h,i\in \{1,2,3\}$ with $h\neq i$.
By our choice of $T_y$ and since $G_y$ is connected, we find that $G_y=T_y$, or $G_y$ is a cycle obtained from $T_y$ after adding the edge $u_1u_2$.
In the first case we are done. In the second case, it follows from the 
bag-minimality of ${\cal X}$ that $G_y$ is a triangle whose vertices are the unique $X_{z_i}$-attachment vertices in $X_y$, which means we are also done.

\medskip
\noindent
If there is some $j\in \{1,2,3\}$ such that $|N(X_{z_j}) \cap X_y| \geq 2$, then there is no unique $X_{z_j}$-attachment in $X_y$. It follows that in this case $G_y$ is a path between the unique $X_{z_h}$-attachment and the unique $X_{z_i}$-attachment for $h, i \in \{1,2,3\} \setminus \{j\}$ with $h\neq i$.
\end{proof}

\noindent
For fixed $k\geq 2$, the {\sc $k$-Disjoint Subgraphs} problem has as input a graph $G$ with $k$ pairwise disjoint vertex subsets $Z_1,\ldots, Z_k\subseteq V(G)$. It asks whether there exist $k$ pairwise disjoint connected vertex subsets $S_1,\ldots,S_k$ with $Z_i\subseteq S_i$ for every $i\in \{1,\ldots,k\}$. 
For a constant $c\geq 1$, a subclass of instances $(G,Z_1,\ldots,Z_k)$ is {\it $c$-bounded} if $|Z_i|\leq c$ for every $i\in \{1,\ldots,k\}$.
The {\sc $k$-Disjoint Subgraphs} problem is \NP-complete even if $k=2$ and $|Z_1|=2$~\cite{HPW09}, but polynomial-time solvable for $c$-bounded instances, as proven by Robertson and Seymour.

\begin{theorem}[\cite{RS95}]\label{t-kdisjoint}
For every pair of constants $c\geq 1$ and $k\geq 2$, {\sc $k$-Disjoint Subgraphs} is polynomial-time solvable for $c$-bounded instances.
\end{theorem}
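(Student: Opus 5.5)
The statement is Theorem~\ref{t-kdisjoint}, cited from Robertson and Seymour~\cite{RS95}. So the plan is to reduce it to the $k$-Disjoint Paths / rooted minor machinery rather than prove it from scratch. Let $(G,Z_1,\ldots,Z_k)$ be a $c$-bounded instance and write $Z=\bigcup_i Z_i$, so $|Z|\le ck$ is a constant.

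The first step is to recast the question as a rooted minor problem with constant-size pattern. Let $R$ be the graph on vertex set $Z$ in which, for each $i$, the vertices of $Z_i$ form a clique (or any spanning tree), and there are no other edges. Disjoint connected sets $S_1,\ldots,S_k$ with $Z_i\subseteq S_i$ exist if and only if $G$ contains a model of $R$ that is rooted at $Z$. Here a rooted model means disjoint connected branch sets $B_z\ni z$, with $B_z$ adjacent to $B_{z'}$ whenever $zz'\in E(R)$.

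\begin{itemize}
\item Forward direction: given the $S_i$, split each $S_i$ into branch sets by taking a spanning tree of $G[S_i]$ and partitioning it into subtrees, each containing exactly one vertex of $Z_i$. Adjacent pieces then give the needed edges, using a tree pattern for $Z_i$.
\item Backward direction: the union over $z\in Z_i$ of the branch sets is connected and contains $Z_i$.
\end{itemize}

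The second step handles the reduction to the algorithmic core. Robertson and Seymour's Graph Minors XIII algorithm decides, for fixed $|Z|$, the folio of $G$ relative to $Z$ in $O(n^3)$ time. Equivalently, it decides whether a fixed-size rooted minor is present. I would invoke the argument in its standard form:
\begin{itemize}
\item If the treewidth of $G$ is bounded by a function of $ck$, use dynamic programming over a tree decomposition.
\item Otherwise, a large grid or wall exists. Either $G$ has a large clique minor highly connected to $Z$, which routes everything, or the flat wall theorem yields an irrelevant vertex whose deletion preserves the answer. Recurse after deleting it.
\end{itemize}

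The main obstacle is the irrelevant vertex argument, namely showing that rerouting through a flat wall never destroys a rooted model. This is the deep content of \cite{RS95} and is not reproved here. Since it is an established result, the proof simply cites it, with the reduction in the first step matching the problem to the rooted-minor formulation.
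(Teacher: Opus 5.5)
The paper gives no proof of this statement. It is imported directly from \cite{RS95}, whose algorithm already covers the disjoint connected subgraphs problem with $|Z_1|+\cdots+|Z_k|$ bounded. Your closing appeal to the Graph Minors machinery (dynamic programming for bounded treewidth, otherwise an irrelevant vertex from a flat wall) is therefore all the paper relies on, and your first step could simply be dropped.

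As written, however, that first step is incorrect. Once you fix a single pattern $R$ in advance, the forward direction of your claimed equivalence fails. Cutting a spanning tree of $G[S_i]$ into subtrees, one per vertex of $Z_i$, gives as quotient \emph{some} tree on $Z_i$ that depends on $S_i$. It need not be the clique, nor a spanning tree you chose beforehand. For example, let $G$ consist of a path $pqr$ and an isolated vertex $s$, with $Z_1=\{p,q,r\}$ and $Z_2=\{s\}$. The sets $S_1=\{p,q,r\}$ and $S_2=\{s\}$ form a solution. Yet any $Z$-rooted model forces $B_p=\{p\}$ and $B_r=\{r\}$, which are non-adjacent. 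So there is no rooted model of the clique on $Z_1$, nor of the tree with edges $pr$ and $rq$.

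The repair is a disjunction. The sets $S_1,\ldots,S_k$ exist if and only if, for some choice of spanning trees $T_i$ of the complete graph on $Z_i$, $G$ has a $Z$-rooted model of $T_1\cup\cdots\cup T_k$. Equivalently, $G$ has a $Z$-rooted minor on vertex set $Z$ in which every $Z_i$ induces a connected subgraph. Since $|Z_i|\le c$ for every $i$, there are only constantly many such choices. The problem thus becomes a constant number of fixed-size rooted-minor tests, and the rest of your argument goes through.
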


\section{The Kite $\mathbb{K}$}\label{s-diamondplus}

To prove that ${\mathbb{K}}$-{\sc Induced Minor} is polynomial-time solvable, we first show a structural lemma regarding $\mathbb{K}$-models in $G$.

\begin{lemma}\label{l:diamond-bags}
 If a graph $G$ contains $\mathbb{K}$ as an induced minor, then 
$G$ has a $\mathbb{K}$-model ${\mathcal X}$ such that: 
        $|X_1| = |X_5| = 1$, and 
        $|N(X_1) \cap X_2| \leq 3$, and
        $|N(X_5) \cap X_3| \leq 2$, and $|N(X_5) \cap X_4| \leq 2$.
\end{lemma}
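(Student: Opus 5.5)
The plan is to take a suitably extremal model and show that each of the four conditions holds by exhibiting a strictly smaller model whenever one fails. Throughout I use the labelling forced by the statement. Vertex $1$ is a pendant vertex adjacent to $2$. The vertices $2,3,4,5$ induce a diamond in which $3$ and $4$ are the degree-$3$ vertices, so $2$ is adjacent to $1,3,4$, vertex $3$ is adjacent to $2,4,5$, vertex $4$ is adjacent to $2,3,5$, and $5$ is adjacent to $3,4$.

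Among all minimum $\mathbb{K}$-models of $G$, I choose ${\mathcal X}$ with $|X_5|$ as small as possible. Every minimum model is bag-minimal, so Lemma~\ref{l:general}(i) gives $|X_1|=1$. By Lemma~\ref{l:general}(ii), $G_5$ is a path $p_1\cdots p_k$ in which $p_1$ is the unique $X_3$-attachment and $p_k$ is the unique $X_4$-attachment. Suppose $k\ge 2$, and set $X_3'=X_3\cup\{p_1,\dots,p_{k-1}\}$ and $X_5'=\{p_k\}$. Then $X_3'$ is connected and still adjacent to $X_2$ and $X_4$. It is not adjacent to $X_1$, because $X_5$ is anticomplete to $X_1$. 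The set $X_5'$ is adjacent to $X_3'$ through $p_{k-1}$ and to $X_4$, and it is still non-adjacent to $X_1$ and $X_2$. This is a model of the same total size, hence again minimum, but with a smaller fifth bag. That contradicts the choice, so $|X_5|=1$. From now on write $X_1=\{u\}$ and $X_5=\{v\}$.

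Next I bound $|N(u)\cap X_2|\le 3$. Suppose $u$ has at least two neighbours in $X_2$. Then Lemma~\ref{l:general}(iii) says $G_2$ is an induced path $P$ from the unique $X_3$-attachment $a$ to the unique $X_4$-attachment $b$. Suppose further that $u$ has four neighbours $c_1,c_2,c_3,c_4$ on $P$, in this order. I take $X_2'=aPc_1\cup\{u\}\cup c_4Pb$ and $X_1'=\{c_2\}$, and keep all other bags. The set $X_2'$ is connected through $u$, it is adjacent to $X_3$ and $X_4$ through $a$ and $b$, and it is not adjacent to $X_5$, because $u$ is not adjacent to $X_5$. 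The vertex $c_2$ is adjacent to $u\in X_2'$. It is an inner vertex of $P$ distinct from $a$ and $b$, so by the uniqueness of the attachments it has no neighbour in $X_3\cup X_4$, and it has none in $X_5$. We delete the at least two vertices $c_2,c_3$ strictly between $c_1$ and $c_4$, and add back only $u$ and $c_2$ in new roles. Hence the total size strictly drops, contradicting minimality. (With only three neighbours the count does not drop, which explains the bound $3$.)

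The analogous step for $v$ is the main obstacle. The naive analogue would use a middle neighbour as the new $X_5$, but that vertex need not be adjacent to $X_4$. Instead I use a tail of the path. Suppose $v$ has at least two neighbours in $X_3$. Then by Lemma~\ref{l:general}(iii), $G_3$ is a path $P$ from the unique $X_2$-attachment $a$ to the unique $X_4$-attachment $b$. Suppose $v$ has three neighbours $c_1,c_2,c_3$ on $P$, in this order. I set $X_3'=aPc_1\cup\{v\}$ and $X_5'=c_3Pb$, keeping the other bags.

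\begin{itemize}
\item $X_3'$ is connected and adjacent to $X_2$ through $a$, to $X_4$ through $v$, and to $X_5'$ through the edge $vc_3$.
\item $X_5'$ is adjacent to $X_4$ through $b$. It is not adjacent to $X_2$, since $c_3\ne a$ and $a$ is the unique $X_2$-attachment.
\item Neither new bag is adjacent to $X_1$.
\end{itemize}

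The old total $|P|+1$ becomes at most $|P|$, because $c_2$ is lost. This contradicts minimality, so $|N(X_5)\cap X_3|\le 2$. The symmetric argument, with the roles of $3$ and $4$ exchanged, gives $|N(X_5)\cap X_4|\le 2$.
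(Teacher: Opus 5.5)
Your proof is correct. The set-up and the first three steps follow the paper closely. Both proofs take a minimum model and minimise $|X_5|$. Both get $|X_5|=1$ by redistributing the path $G_5$ (you push the $X_3$-side into $X_3$, the paper moves the $X_4$-end into $X_4$). Both bound $|N(u)\cap X_2|$ by splicing $u$ into $G_2$ and reusing a removed vertex as the new $X_1$. The paper drops $q_1,q_2$ after the first neighbour $r_1$, while you drop everything strictly between $c_1$ and $c_4$; this difference is cosmetic.

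The genuine difference is the final step, $|N(v)\cap X_3|\le 2$. The paper keeps the fifth bag a singleton: it sets $X_5'=\{q_1\}$, puts $aX_3r_1\cup\{v\}$ into $X_3'$, and absorbs the tail $q_2X_3b$ into $X_4$. That swap reuses every vertex, so the total size does not drop. The paper therefore needs a third tie-breaker, minimising $|N(X_5)\cap X_3|+|N(X_5)\cap X_4|$, and argues that this quantity falls from at least $4$ to at most $3$.

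Your swap instead takes the whole tail $c_3Pb$ as the new fifth bag and discards $c_2$. Your checks of that swap are all right:
\begin{itemize}
\item $c_3\neq a$, so the new fifth bag misses $X_2$.
\item $v$ supplies the adjacency of $X_3'$ to $X_4$.
\item $vc_3$ supplies the adjacency of $X_3'$ to $X_5'$.
\end{itemize}
The resulting model is strictly smaller, which contradicts minimality directly. So your route dispenses with the lexicographic potential and is simpler. It also shows slightly more: every minimum $\mathbb{K}$-model that minimises $|X_5|$ already satisfies all four properties.
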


\begin{proof}
Let $G$ be a graph that contains $\mathbb{K}$ as an induced minor.
Hence, $G$ has a $\mathbb{K}$-model $\mathcal X$. 
We choose $\mathcal X$ to be a minimum $\mathbb{K}$-model that, over all minimum $\mathbb{K}$-models in $G$, minimises $|X_5|$ and subject to this, minimises $|N(X_5) \cap X_3| + |N(X_5) \cap X_4|$. 
As $\mathcal X$ is minimum, ${\cal X}$ is bag-minimal. Hence, by Lemma~\ref{l:general}, $|X_1| = 1$. Let $u$ be the unique vertex in $X_1$.

We first claim that $|N(u) \cap X_2| \leq 3$. For a contradiction, assume $|N(u) \cap X_2| \geq 4$. By Lemma~\ref{l:general}, this means that $G_2$ is a path from a vertex $a$ to a vertex $b$, where $a$ is the unique $X_3$-attachment vertex and $b$ is the unique $X_4$-attachment vertex in $X_2$.

Let $r_1\in X_2$ be the neighbour of $u$ closest to $a$ in $G_2$, and let 
 $r_2\in X_2$ be the neighbour of $u$ closest to $b$ in $G_2$; note that possibly $r_1 = a$ or $r_2 = b$. Let $r_1,q_1,q_2$ be the first three vertices on the subpath $r_1G_2r_2$. Since $|N(u) \cap X_2| \geq 4$, the vertices $r_1, q_1, q_2, r_2$ are distinct. It follows that there is a $\mathbb{K}$-model  ${\cal X}'$ in $G$ such that $X'_1 = \{q_1\}$, $X'_2 = (X_2 \setminus \{q_1,q_2\}) \cup \{u\}$, $X'_3 = X_3$, $X'_4 = X_4$ and $X'_5 = X_5$; see also Figure~\ref{fig:diamond-bags-X2}. As $q_2$ is not in any bag of ${\cal X}'$, ${\cal X}'$ is smaller than ${\cal X}$; a contradiction. Hence, $|N(u) \cap X_2| \leq 3$.

 We now claim that $|X_5|\geq 2$. By Lemma~\ref{l:general}, $G_5$ is a path from a unique $X_3$-attachment vertex $v$ to a unique $X_4$-attachment vertex $v'$ in $X_5$. If $v' \neq v$, then we obtain a new $\mathbb{K}$-model ${\cal X}'$ in $G$ after moving $v'$ from $X_5$ to $X_4$. Let ${\cal X}'$. As $|X_1'| =1$ and $|X_5'| < |X_5|$, this contradicts our choice of $\mathcal X$. It follows that $v' = v$ and $X_5=\{v\}$. That is, $|X_5|=1$.

 It remains to show that $|N(v) \cap X_3| \leq 2$ and $|N(v) \cap X_4| \leq 2$. Suppose, this is not the case. As, without loss of generality, $|N(v) \cap X_3| \geq |N(v) \cap X_4|$, it follows that, $|N(v) \cap X_3| \geq 3$. 
  By Lemma~\ref{l:general}, $G_3$ is a path from a unique $X_2$-attachment vertex $a$ to a unique $X_4$-attachment vertex $b$ in $X_3$.
        Let $r_1\in X_3$ be the neighbour of $v$ closest to $a$ in $G_3$, and let $r_2\in X_3$ be the neighbour of $v$ closest to $b$ in $G_3$; note that possibly $r_1 = a$ or $r_2 = b$. Let $r_1,q_1, q_2$ be the first three vertices on the subpath $r_1G_3r_2$. Since $|N(v) \cap N(X_3)| \geq 3$, the vertices $r_1$, $q_1$, $r_2$ are distinct. We now set
        $X_1'=X_1$, $X_2'=X_2$,
      $X_3' = V(aX_3r_1) \cup \{v\}$, $X_4' = V(q_2X_3b) \cup X_4$ and $X_5' = \{q_1\}$.
This yields another minimum $\mathbb{K}$-model ${\cal X}'$ in $G$ with $|X_5'|=1$;      
see also Figure~\ref{fig:diamond-bags-H5}. 
Note that $N(q_1) \cap X_3'\subseteq \{r_1,v\}$ and $N(q_1) \cap X_4'=\{q_2\}$ and so $|N(X_5)'\cap X'_3|+|N(X_5')\cap X_4'| \leq 3$.
As $|N(X_5)\cap X_3|+|N(X_5)\cap X_4|=
|N(v) \cap X_3| + |N(v) \cap X_4| \geq 4$, this contradicts our choice of ${\cal X}$. Hence, $|N(v) \cap X_3| \leq 2$ and $|N(v) \cap X_4| \leq 2$. This concludes the proof.
\end{proof}

\begin{figure}
    \centering
    \includegraphics[width=0.8\linewidth, page=5]{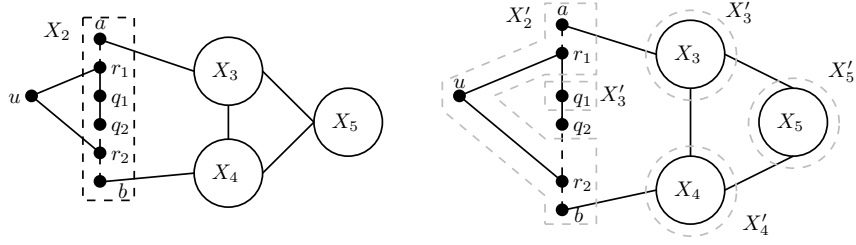}
    \caption{The first ``swap'' in the proof of Lemma~\ref{l:diamond-bags}.
    Note that $q_1$ and $q_2$ may be adjacent to $u$, and that $q_2$ is no longer in any bag in the right $\mathbb{K}$-model in $G$.}
    \label{fig:diamond-bags-X2}
\end{figure}

\begin{figure}
    \centering
    \includegraphics[width=0.7\linewidth, page=6]{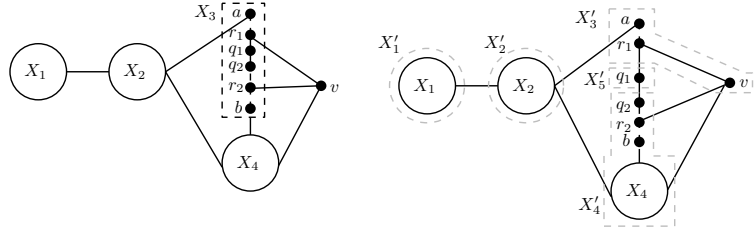}
    \caption{The second ``swap'' in the proof of Lemma~\ref{l:diamond-bags}. Note that $q_1$ and $q_2$ may be adjacent to $v$.}
    \label{fig:diamond-bags-H5}
\end{figure}

\begin{figure}
    \centering
    \includegraphics[width=0.4\linewidth, page=7]{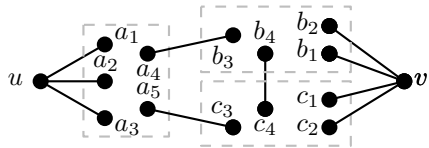}
    \caption{Those vertices ``guessed'' in the algorithm of Theorem~\ref{t-dplus}.}
    \label{fig:diamond-guess}
\end{figure}

\noindent
We now give our polynomial-time algorithm. The main idea is that we first reduce to a polynomial number of $5$-bounded instances of $3$-{\sc Disjoint Subgraphs} and then solve each of these instances by applying Theorem~\ref{t-kdisjoint}.

\begin{theorem}\label{t-dplus}
$\mathbb{K}$-{\sc Induced Minor} is polynomial-time solvable.
\end{theorem}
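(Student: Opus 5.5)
The plan is to reduce to polynomially many $5$-bounded instances of $3$-{\sc Disjoint Subgraphs} by guessing the bounded structure given by Lemma~\ref{l:diamond-bags}, and then to apply Theorem~\ref{t-kdisjoint}. I read the labelling of $\mathbb{K}$ off the proof of Lemma~\ref{l:diamond-bags}, assuming the figure agrees: vertex~$1$ is a pendant vertex attached to~$2$, vertices $2,3,4$ form a triangle, and vertex~$5$ is adjacent to exactly $3$ and~$4$. Hence a $\mathbb{K}$-model needs the following: $X_1$ is adjacent only to $X_2$; $X_5$ is adjacent to $X_3$ and $X_4$ but not to $X_2$ or $X_1$; and $X_2,X_3,X_4$ are pairwise adjacent.

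\textbf{Guessing step.} By Lemma~\ref{l:diamond-bags}, if $G$ has $\mathbb{K}$ as an induced minor, there is a model with $X_1=\{u\}$ and $X_5=\{v\}$, where $A_2=N(u)\cap X_2$ has $1\le|A_2|\le 3$, and $B_3=N(v)\cap X_3$, $B_4=N(v)\cap X_4$ each have size between $1$ and $2$. The algorithm guesses all of the following:
\begin{itemize}
\item the vertices $u$ and $v$, which are non-adjacent;
\item the sets $A_2\subseteq N(u)$, $B_3\subseteq N(v)$ and $B_4\subseteq N(v)$;
\item three edges $a_2a_3$, $c_2c_4$ and $e_3e_4$, witnessing the adjacencies between $X_2,X_3$, between $X_2,X_4$ and between $X_3,X_4$, with the subscript indicating the intended bag.
\end{itemize}
This gives $O(n^{15})$ guesses. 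We discard a guess unless the following hold:
\begin{itemize}
\item all guessed vertices are distinct where required;
\item no vertex intended for $X_2$ is adjacent to $v$;
\item no vertex intended for $X_3$ or $X_4$ is adjacent to $u$;
\item the vertices intended for $X_2$ lying in $N(u)$ are exactly those in $A_2$;
\item the vertices intended for $X_3$ lying in $N(v)$ are exactly those in $B_3$, and the analogous condition holds for $X_4$.
\end{itemize}

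\textbf{Reduction.} Let $G'$ be the graph obtained from $G$ by deleting $u$, $v$, and every vertex of $N(u)\cup N(v)$ not in $A_2\cup B_3\cup B_4$. Set
\[
Z_2=A_2\cup\{a_2,c_2\}, \qquad Z_3=B_3\cup\{a_3,e_3\}, \qquad Z_4=B_4\cup\{c_4,e_4\}.
\]
These satisfy $|Z_2|\le 5$ and $|Z_3|,|Z_4|\le 4$, so $(G',Z_2,Z_3,Z_4)$ is a $5$-bounded instance, which we solve with Theorem~\ref{t-kdisjoint}. The algorithm answers yes if and only if some guess yields a yes-instance.

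\textbf{Soundness.} Suppose some instance has a solution $S_2,S_3,S_4$. Then $\{\{u\},S_2,S_3,S_4,\{v\}\}$ is a $\mathbb{K}$-model, for the following reasons.
\begin{itemize}
\item The guessed edges make $S_2,S_3,S_4$ pairwise adjacent.
\item $u$ has a neighbour in $S_2$, because $A_2\neq\emptyset$. Its only neighbours left in $G'$ are those in $A_2\subseteq S_2$, so $u$ is anticomplete to $S_3\cup S_4$.
\item Symmetrically, $v$ sees exactly $B_3\cup B_4$ inside $G'$. So $v$ is adjacent to both $S_3$ and $S_4$, and anticomplete to $S_2$.
\item $u$ and $v$ are non-adjacent.
\end{itemize}

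\textbf{Completeness.} Take the model of Lemma~\ref{l:diamond-bags} and the corresponding guess. No bag $X_2,X_3,X_4$ contains a deleted vertex, for two reasons. First, $N(u)\cap(X_3\cup X_4)=\emptyset$ and $N(u)\cap X_2=A_2$. Second, $N(v)\cap X_2=\emptyset$, while $N(v)\cap X_3=B_3$ and $N(v)\cap X_4=B_4$. Hence $X_2,X_3,X_4$ form a solution in $G'$.

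\textbf{Main obstacle.} The step needing the most care is making the deletion of $N(u)\cup N(v)$ simultaneously safe and sufficient. It is safe only because Lemma~\ref{l:diamond-bags} pins down \emph{exactly} which neighbours of $u$ and $v$ lie in the bags. It is sufficient because the required non-adjacencies all involve $X_1$ or $X_5$: no non-adjacency is required among $X_2,X_3,X_4$, so the disjoint-subgraphs formulation never has to enforce one.
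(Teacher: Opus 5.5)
Your proposal is correct and follows essentially the same route as the paper. Both guess $u$, $v$, the at most $3+2+2$ attachment vertices given by Lemma~\ref{l:diamond-bags}, and three edges linking $X_2,X_3,X_4$; both then delete the remaining neighbours of $u$ and $v$ and solve the resulting $5$-bounded $3$-{\sc Disjoint Subgraphs} instance with Theorem~\ref{t-kdisjoint}. Your handling of the deletion step is cleaner than the paper's text, which literally removes all of $N[u]\cup N[v]$ (including the guessed vertices): you delete only the non-guessed neighbours and require the guessed sets to be exactly $N(u)\cap X_2$, $N(v)\cap X_3$ and $N(v)\cap X_4$.
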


\begin{proof}
Let $G$ be a graph.
From Lemma~\ref{l:diamond-bags} it follows that $G$ contains $\mathbb{K}$ as an induced minor if and only if $G$ contains some $\mathbb{K}$-model ${\mathcal X}$ such that $|X_1| = |X_5| = 1$, and $|N(X_1) \cap X_2| \leq 3$, and $|N(X_5) \cap X_3| \leq 2$, and $|N(X_5) \cap X_4| \leq 2$.
Hence, it suffices to suffices to decide if $G$ contains a $\mathbb{K}$-model of this form.
Our algorithm will proceed by branching over all options for the sets $X_1$, $X_5$, $N(X_1) \cap X_2$, $N(X_5) \cap X_3$ and $N(X_5) \cap X_4$ as well as three edges that will connect the sets $X_2$, $X_3$, $X_4$. We then decide if there exists a $\mathbb{K}$-model ${\cal X}$ in $G$ such that these vertices/edges are contained in those bags as described.

Namely we consider all possible $O(n^{15})$ options of choosing a vertex $u$ to form $X_1$; 
a vertex $v\notin N(u)$ to form $X_5$; at most three vertices $a_1,a_2,a_3$ of $N(u) \setminus N(v)$ to go into $X_2$; at most two vertices $b_1,b_2$ of 
$N(v) \setminus N(u)$ to go into $X_3$; at most two vertices $c_1,c_2$ of $N(v) \setminus N(u)$
to go into $X_4$; an edge $a_4b_3$ with 
$a_4\notin N(v)$ and $b_3\notin N(u)$ to go into $X_2$ and $X_3$, respectively;
an edge $a_5c_3$ with $a_5\notin N(v)$ and 
$c_3\notin N(u)$ to go into $X_2$ and $X_4$ respectively; and
an edge $b_4c_4$ with $b_4\notin N(u)$ and 
$c_4\notin N(u)$ to go into $X_3$ and $X_4$, respectively.
We require that the sets $\{a_1,\ldots,a_5\}$, $\{b_1,\ldots,b_4\}$, $\{c_1,\ldots,c_4\}$ are pairwise disjoint.
However, we not require that the vertices $a_1,\ldots,a_5$ are pairwise distinct, likewise we allow duplications in the vertices $b_1,\ldots,b_4$ and the vertices $c_1,\ldots,c_4$. See Figure~\ref{fig:diamond-guess}.

Given that vertex~$1$ is anti-complete to $\{3,4,5\}$ in $\mathbb{K}$, the sets $X_2$, $X_3$ and $X_5$ cannot contain any vertex of $N[u] \cup N[v]$. We may therefore consider the graph $G'$ obtained from $G$ by removing all vertices of $N[u] \cup N[v]$.
 We set $Z_1=\{a_1,\ldots,a_5\}$, 
$Z_2=\{b_1,\ldots,b_4\}$ and $Z_3=\{c_1,\ldots,c_4\}$. 
We highlight that $(G',Z_1,Z_2,Z_3)$ is a $5$-bounded instance of {\sc $3$-Disjoint Connected Subgraphs}.
Hence, by Theorem~\ref{t-kdisjoint}, we find in polynomial time whether $V(G')$ contains three pairwise disjoint connected sets $S_1$, $S_2$, $S_3$ with
$Z_i\subseteq S_i$ for every $i\in \{1,2,3\}$.
If such sets exist, then we let $X_1=\{u\}$, $X_2=S_1$, $X_3=S_2$, $X_4=S_3$ and $X_5=\{v\}$. By construction, ${\cal X}$ is a $\mathbb{K}$-model in $G$.
Otherwise, we discard our guess,
as there is no $\mathbb{K}$-model 
in $G$ such that those guessed vertices are contained in the sets as described.

The correctness of our algorithm follows from its description. Since we apply Theorem~\ref{t-kdisjoint} $O(n^{15})$ times, our algorithm runs in polynomial time. 
\end{proof}

\section{The Graph $\mathbf{F_1}$}\label{s-f1}

To prove that $F_1$-{\sc Induced Minor} is polynomial-time solvable, we show a structural lemma regarding $F_1$-models.

\begin{lemma}\label{l:loli}
For every minimum $F_1$-model ${\cal X}$ in a graph $G$, $X_1=\{u\}$ for some $u\in V(G)$, and $X_2$ contains vertices of at most seven connected components of $G[N(u)]$.
\end{lemma}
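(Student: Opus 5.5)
The plan is to argue by contradiction using the minimality of $\mathcal X$, imitating the first ``swap'' in the proof of Lemma~\ref{l:diamond-bags}. From Figure~\ref{fig:h2}, I read vertex~$1$ of $F_1$ as a pendant vertex whose unique neighbour is vertex~$2$; the rest of the argument only uses this. Since a minimum model is bag-minimal, Lemma~\ref{l:general}-(i) gives $X_1=\{u\}$ for some $u\in V(G)$. Vertex~$2$ has at most three neighbours in $F_1$, namely $1$ and at most two others. So Lemma~\ref{l:general} describes $G_2$: it is a single vertex, a path, a tripod or a triangle tripod.

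If $|N(u)\cap X_2|=1$, then $X_2$ meets exactly one component of $G[N(u)]$ and we are done. Otherwise $u$ is not the unique $X_1$-attachment of anything, and Lemma~\ref{l:general}-(ii)/(iii) (the ``in particular'' part) applies. It gives that $G_2$ is an induced path $P$ from a unique $X_z$-attachment $a$ to a unique $X_{z'}$-attachment $b$, where $z,z'$ are the other neighbours of $2$ in $F_1$. In particular, every vertex of $P$ other than $a$ and $b$ has no neighbour in any bag $X_w$ with $w\neq 1,2$.

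Now suppose $X_2$ meets at least eight components of $G[N(u)]$. Two vertices of $N(u)\cap X_2$ that are consecutive on $P$ (adjacent in $G$) lie in the same component. Hence, walking along $P$, the vertices of $N(u)$ fall into at least eight maximal runs, and consecutive runs are separated by at least one vertex of $P$ outside $N(u)$. Let $r_1$ be the neighbour of $u$ on $P$ closest to $a$, and $r_2$ the one closest to $b$. Choose $q$ to be a neighbour of $u$ in a run strictly between the first two and the last two runs. This guarantees that $q\notin\{a,b\}$ and that $q$ is non-adjacent to $a$, $b$, $r_1$ and $r_2$. Define
\[
X_1'=\{q\},\quad X_2'=V(aPr_1)\cup\{u\}\cup V(r_2Pb),\quad X_w'=X_w \ \text{ for } w\notin\{1,2\}.
\]
This is where the number of components is used: the slack (hence the constant seven) ensures that $q$ avoids the attachments and that the vertices discarded between $r_1$ and $r_2$ include at least one vertex besides $q$, in fact a whole non-neighbour gap.

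It remains to verify that $\mathcal X'$ is an $F_1$-model, and I expect this bookkeeping to be the main work. First, $X_2'$ is connected because $u$ is adjacent to both $r_1$ and $r_2$. Second, $q$ is adjacent to $u\in X_2'$. Third, $q$ is anti-complete to every $X_w'$ with $w\neq 1,2$, because $q$ is an interior non-attachment vertex of $P$. Fourth, $u$ is anti-complete to all $X_w$ with $w\neq 1,2$, because $u$ was the bag of a pendant vertex. The adjacencies of $X_2'$ with $X_z$ and $X_{z'}$ are preserved, since $a,b\in X_2'$ and no new attachments are created. The count of vertices used drops strictly: $u$ is still used, $q$ is still used, and the gap vertices between $r_1$ and $r_2$ are dropped. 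This contradicts minimality of $\mathcal X$, so at most seven components of $G[N(u)]$ meet $X_2$.

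The delicate point is choosing $q$ so that it is simultaneously a neighbour of $u$, not an attachment, and not adjacent to the retained parts $aPr_1$ and $r_2Pb$. I would handle this by counting runs generously, since the bound seven leaves room to discard boundary runs.
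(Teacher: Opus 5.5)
Your proof has a genuine gap. It rests on the claim that vertex~$2$ has at most three neighbours in $F_1$, and that claim is false. In $F_1$, the vertices $2,3,4,5$ form a $K_4$ and $1$ is a pendant vertex at $2$, so $2$ is adjacent to all of $1,3,4,5$. The paper uses exactly this ("vertex $2$ is complete to $\{1,3,4,5\}$"), and the algorithm guesses edges between every pair of $X_3,X_4,X_5$.

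Consequently Lemma~\ref{l:general} does not apply to $X_2$. Once $u$ has two neighbours in $X_2$, Lemma~\ref{l:leaf} only tells you that a spanning tree of $G_2$ has at most three leaves, namely unique $X_3$-, $X_4$- and $X_5$-attachments. So $G_2$ need not be a path. Even when $G_2$ is a path from the unique $X_3$-attachment to the unique $X_4$-attachment, the $X_5$-attachments can be arbitrary interior vertices. Your statement that interior vertices of $P$ have no neighbours in bags other than $X_1,X_2$ therefore fails.

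Your swap then breaks in two ways:
\begin{itemize}
\item $q$ may have a neighbour in $X_5$, which makes $X_1'$ adjacent to $X_5'$.
\item The discarded part of $r_1Pr_2$ may contain every $X_5$-attachment, which makes $X_2'$ non-adjacent to $X_5$.
\end{itemize}
Your argument would be correct if $2$ had degree~$3$, and it would then need only about three components. That the paper needs eight is a hint that the constant is doing more work than absorbing boundary runs.

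What is missing is a way to handle the third bag $X_5$. The paper proceeds in three steps.
\begin{itemize}
\item It uses pigeonhole on a spanning tree with at most three leaves to find a leaf-to-leaf path meeting at least five of the components. Four leaves are impossible, since one leaf would then be the unique $X_1$-attachment.
\item It looks at two vertices $c,c'$ a few steps after the first neighbour of $u$ on this path, and symmetrically $d,d'$ before the last one. If $\{c,c'\}$ is anti-complete to $X_5$, the pendant swap $X_1'=\{c\}$, $X_2'=(X_2\setminus\{c,c'\})\cup\{u\}$ gives a smaller model. Otherwise both ends carry $X_5$-attachments, and Lemma~\ref{l:leaf} forces $G_2$ to be a path.
\item It takes a non-neighbour $w$ of $u$ in the middle of the path, with path-neighbours $w',w''$. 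If $w$ has a neighbour in $X_5$, it uses the symmetry of the $K_4$ to exchange the roles of bags $2$ and $5$: $X_1'=\{w\}$, $X_2'=X_5$, $X_5'=(X_2\cup\{u\})\setminus\{w,w',w''\}$. Otherwise the pendant swap at $w$ applies.
\end{itemize}
Some case split of this kind on the $X_5$-attachments is necessary; a single swap like yours cannot account for them.
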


\begin{proof}
Let ${\cal X}$ be a minimum $F_1$-model in a graph $G$. Since $\mathcal X$ is minimum, ${\cal X}$ is bag-minimal, so from Lemma~\ref{l:general}, $X_1=\{u\}$ for some $u\in V(G)$. It remains to show that $X_2$ contains vertices of at most seven connected components of $G[N(u)]$.
For a contradiction, suppose there is some set ${\cal D}$ of eight connected components of $G[N(u)]$, such that each component in ${\cal D}$ contains at least one vertex from $X_2$.

Let $T$ be a spanning tree of $G_2$. As vertex $2$ is complete to $\{1,3,4,5\}$  in $F_1$, and ${\cal X}$ is bag-minimal, 
we find that $T$ has at most four leaves
by Lemma~\ref{l:leaf}-(i).
If $T$ has four leaves, then one of them is the only $X_1$-attachment vertex in $X_2$, again due to Lemma~\ref{l:leaf}-(i). This is not possible, as $u$ has at least eight neighbours in $X_2$. Hence, $T$ has at most three leaves. By the pigeonhole principle,
$T$ has a path $P$ from a leaf~$r_1$ to a leaf $r_2$ of $T$ that contains vertices from at least five different connected components of ${\cal D}$.
From Lemma~\ref{l:leaf},  $r_1$ is the unique $X_i$-attachment vertex and $r_2$ is the unique $X_j$-attachment vertex
for some $i,j\in \{3,4,5\}$ with $i\neq j$. 
By symmetry, we may assume $i=3$ and $j=4$. 

Let $a$ and $b$ be those neighbours of $u$ on $P$ closest to $r_1$ and $r_2$, respectively. Let $a'$ be the neighbour of $a$ on $aPb$, and let $b'$ be the neighbour of $b$ on $aPb$. If $a'$ is adjacent to $u$, then $a$ and $a'$ are in the same component $D\in {\cal D}$.
The same holds for $b$ and $b'$. Hence, $a'Pb'$ has vertices from at least five different connected components of ${\cal D}$. Any subpath of $P$ between two vertices of different components of ${\cal D}$ must contain at least one vertex not in $N(u)$. It follows that $|V(a'Pb')|\geq 9$. Let $c$ be the neighbour of $a'$ on $a'Pb'$, and $c'$ be the neighbour of $c$ on $cPb'$. Let $d$ be the neighbour of $b'$ on $a'Pb'$, and let $d'$ be the neighbour of $d$ on $a'Pd$.
As $|V(a'Pb')|\geq 9$, we have that the vertices $a,a',c,c',d',d,b',b$ are pairwise distinct.

\begin{figure}[b]
    \centering
    \includegraphics[width=0.7\linewidth, page=8]{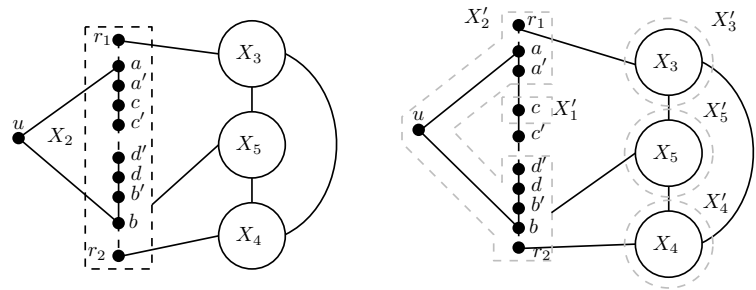}
\caption{The first ``swap'' in the proof of Lemma~\ref{l:loli}. Note that $c'$ is not used in the right $F_1$-model.}\label{fig:loli-X5-c}
\end{figure}

First suppose that $\{c,c'\}$ is anti-complete to $X_5$.
Hence, there is an $X_5$-attachment vertex in $X_2\setminus \{c,c'\}$.  Moreover, $X_2\setminus \{c,c'\}$ contains $r_1$ and $r_2$ and thus has both the unique $3$-attachment vertex and the unique $4$-attachment vertex in $X_2$.
It follows that $c$ is anti-complete to $X_5$, and so $c$ only has neighbours in $X_2\cup \{u\}$. We also note that $(X_2\setminus \{c,c'\})\cup \{u\}$ is connected. Hence, $G$ contains an $F_1$-model ${\cal X}'$ such that
 $X_1'=\{c\}$, $X_2'=(X_2\setminus \{c,c'\})\cup \{u\}$, $X_3'=X_3$, $X_4'=X_4$ and $X_5'=X_5$. See Figure~\ref{fig:loli-X5-c}.
 Because $c'$ is not in any bag of ${\cal X}'$, it holds that ${\cal X}'$ is smaller than ${\cal X}$, a contradiction. 
 
From the above, we conclude that at least one of $c$ and $c'$ is an $X_5$-attachment vertex in $X_2$. By symmetry, at least one of $d$ and $d'$ is an $X_5$-attachment vertex in $X_2$. Since $X_2$ has at least two $X_5$-attachment vertices and also at least seven $X_1$-attachment vertices, $T$ has only two leaves by Lemma~\ref{l:leaf}~(i), and thus $T$ is a path. In fact, by
 Lemma~\ref{l:leaf}~(ii), $G_2=T$ is a path (which goes from $r_1$ to $r_2$ and contains the vertices $a, a', c, c', d', d, b', b$). 

\begin{figure}[t]
    \centering
    \includegraphics[width=0.7\linewidth, page=9]{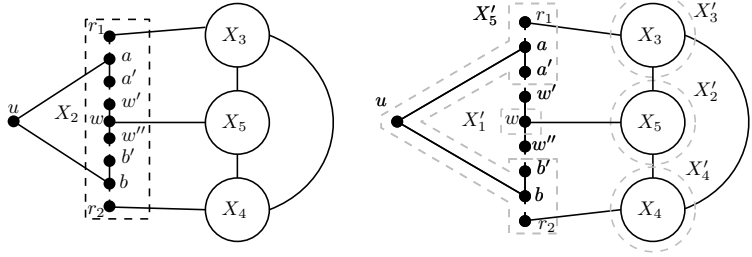}
    \caption{The second ``swap'' in the proof of Lemma~\ref{l:loli}. Note that neither $w'$ nor $w''$ is used in the right $F_1$-model.}\label{fig:loli-X5-w}
\end{figure}

\begin{figure}
    \centering
    \includegraphics[width=0.7\linewidth, page=10]{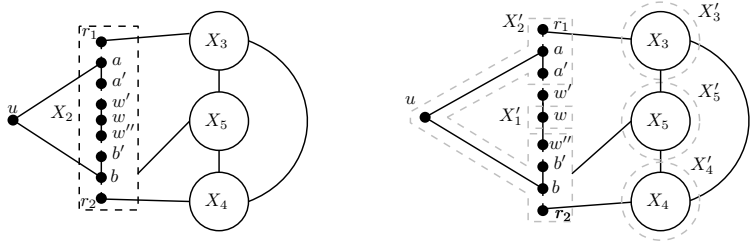}
    \caption{The third ``swap'' in the proof of Lemma~\ref{l:loli}. Note $w'$ is not used in the right $F_1$-model.}\label{fig:loli}
\end{figure}

Let $W$ be the set of vertices of $cPd$ not adjacent to $u$. Since $a'Pb'$ has vertices from different components of ${\cal D}$, it holds that $W$ is non-empty.
Now let $w$ be some arbitrary vertex in $W$, and let $w'$ and $w''$ be the two neighbours of $w$ on $a'Pb'$.
Since $|V(a'Pb')|\geq 9$, we may assume, by symmetry, that $\{c,c'\}\cap \{w,w',w''\}=\emptyset$.
Hence, as one of $c,c'$ is an $X_5$-attachment vertex in $X_2$, we find that there is an $X_5$-attachment vertex in $X_2\setminus \{w,w',w''\}$. Moreover, $X_2\setminus \{w,w',w''\}$ contains the unique $X_3$-attachment vertex $r_1$ and the unique $X_4$-attachment vertex $r_2$ in $X_2$. 
As $w\in W$ and $X_2$ is a path, $w$ only has neighbours in $X_5\cup \{w',w''\}$.
Finally, observe that $(X_2\setminus \{w,w',w''\})\cup \{u\}$ is connected.
It therefore follows that if $w$ is adjacent to some vertex in $X_5$, then $G$ contains an $F_1$-model ${\cal X}'$ such that
$X_1'=\{w\}$, $X_2'=X_5$, $X_3'=X_3$, $X_4'=X_4$ and  $X_5'=(X_2'\cup \{u\})\setminus \{w,w',w''\}$. See Figure~\ref{fig:loli-X5-w}.
As $w'$ and $w''$ are not in any bag of ${\cal X}'$, we find that
${\cal X'}$  is smaller than ${\cal X}$; a contradiction. Hence, $w$ is anti-complete to $X_5$.
We now find that $G$ contains an $F_1$-model ${\cal X}'$ such that
 $X_1'=\{w\}$, $X_2'=(X_2\setminus \{w,w'\})\cup \{u\}$, $X_3'=X_3$, $X_4'=X_4$ and $X_5'=X_5$. See Figure~\ref{fig:loli}.
 Because $w'$ is not in any bag of ${\cal X}'$, it holds that ${\cal X}'$ is smaller than ${\cal X}$, a contradiction. This completes the proof of the lemma.
\end{proof}

\noindent
We are now ready to give our algorithm.
In this case, we reduce to a polynomial number of $15$-bounded instances of $4$-{\sc Disjoint Subgraphs}.

\begin{theorem}\label{t-f1}
$F_1$-{\sc Induced Minor} is polynomial-time solvable.
\end{theorem}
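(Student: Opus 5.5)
We follow the template of Theorem~\ref{t-dplus}: reduce to polynomially many $15$-bounded instances of $4$-{\sc Disjoint Subgraphs} and apply Theorem~\ref{t-kdisjoint}. From the proof of Lemma~\ref{l:loli}, $F_1$ has vertex~$1$ of degree one adjacent only to vertex~$2$, and vertex~$2$ is complete to $\{1,3,4,5\}$. The subgraph on $\{3,4,5\}$ is fixed by $F_1$ (the figure presumably shows a triangle, making $F_1$ a ``lollipop''; if it is not a triangle, we only guess edges for the pairs that are adjacent in $F_1$). By Lemma~\ref{l:loli}, $G$ contains $F_1$ as an induced minor if and only if $G$ has an $F_1$-model with $X_1=\{u\}$ such that $X_2$ meets at most seven components of $G[N(u)]$.

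\textbf{Guesses.} We branch over:
\begin{itemize}
\item the vertex $u$;
\item for each of the at most seven components $D$ of $G[N(u)]$ that $X_2$ meets, one vertex $a_D\in D$;
\item one edge between each pair of bags that must be adjacent: $a_3x_3$, $a_4x_4$ and $a_5x_5$ from $X_2$ to $X_3,X_4,X_5$, and (if $\{3,4,5\}$ is a triangle) $x_{34}x_{43}$, $x_{35}x_{53}$ and $x_{45}x_{54}$.
\end{itemize}
This gives polynomially many options.

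\textbf{Reduction.} Since $3,4,5$ are nonadjacent to $1$, the bags $X_3,X_4,X_5$ avoid $N[u]$, whereas $X_2$ may contain vertices of $N(u)$, but only from the chosen components. We form $G'$ by deleting $u$ and every component of $G[N(u)]$ that was not chosen. Then we require $x_3,x_4,x_5,x_{34},\dots\notin N(u)$. We set $Z_1=\{a_D\}\cup\{a_3,a_4,a_5\}$, with $|Z_1|\le 10$. For each $i\in\{3,4,5\}$, $Z_i$ consists of the guessed endpoints assigned to $X_i$, giving at most three vertices each. Finally, we solve $4$-{\sc Disjoint Subgraphs} on $G'$ minus $N(u)$ for the sets of $3,4,5$. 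The set $S_1$ is allowed to use the kept neighbours of $u$, so it must be handled carefully: we run the instance on $G'$ itself and additionally forbid $S_3,S_4,S_5$ from containing vertices of $N(u)$.

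\textbf{Main obstacle: the forbidding step.} I would handle it as follows. The sets $S_3,S_4,S_5$ must avoid $N(u)$, yet $S_1$ may touch only the chosen components. Each chosen component $D$ is connected and adjacent to $u$. If $S_1$ contains some vertex of $N(u)$, the model we build uses $X_2=S_1\cup\bigcup_{D}V(D)$. Adding these whole components keeps $X_2$ connected and disjoint from the other sets, provided those sets avoid the components. To guarantee this, we instead delete $N[u]$ entirely and contract each chosen component $D$ into a single new vertex $d_D$ placed in $Z_1$. The contracted vertex is adjacent exactly to $N(D)\setminus\{u\}$. Now $Z_1$ has at most $10$ vertices, the instance is $15$-bounded, and $S_3,S_4,S_5$ automatically avoid $N(u)$.

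\textbf{Correctness.} In the forward direction, a minimum model yields valid guesses. Minimality lets us assume $X_2$ contains whole chosen components, since enlarging $X_2$ by components of $N(u)$ it already meets creates no new adjacencies to $X_3,X_4,X_5$, which avoid $N[u]$ anyway. Hence the contracted instance is a yes-instance. In the backward direction, any solution expands, by uncontracting the components, to a valid $F_1$-model with $X_1=\{u\}$. Nonadjacency of $u$ to $X_3,X_4,X_5$ holds since those sets avoid $N[u]$. Adjacency of $X_2$ to each of $X_3,X_4,X_5$, and among $X_3,X_4,X_5$, is witnessed by the guessed edges. Nonadjacency among $X_3,X_4,X_5$ for pairs that are nonadjacent in $F_1$, if any, would need extra deletion of neighbourhoods of guessed vertices; we expect $\{3,4,5\}$ to be a clique, so this does not arise.
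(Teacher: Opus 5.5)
Your proposal is correct and is essentially the paper's proof. Both guess $u$ and the at most seven components of $G[N(u)]$ permitted by Lemma~\ref{l:loli}, contract each chosen component into a terminal of $Z_1$, guess one edge per pair in $\{2,3,4,5\}$, discard the rest of $N(u)$, and apply Theorem~\ref{t-kdisjoint} to a bounded instance of $4$-{\sc Disjoint Subgraphs}. Your hedge is unnecessary, since $\{2,3,4,5\}$ does induce a $K_4$ in $F_1$. Also require at least one chosen component, so that $X_2$ is adjacent to $u$. Deleting $u$ itself is a small improvement: the paper's $G''$ keeps $u$, so $S_1$ could in principle pass through it.
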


\begin{proof}
Let $G$ be a graph. From Lemma~\ref{l:loli}, it follows that if $G$ has $F_1$ as an induced minor, then $G$ has an $F_1$-model ${\cal X}$ where $X_1=\{u\}$ for some $u\in V(G)$ and $X_2$ contains vertices from at most seven connected components of $G[N(u)]$. We consider all $O(n)$ options for choosing the vertex $u$ that forms $X_1$. We also consider all $O(n^7)$ options of choosing the set~${\cal D}$ of connected components of $G[N(u)]$ that have a nonempty intersection with $X_2$. 

For each choice of $u$ and set of connected components ${\cal D}=\{D_1,\ldots,D_s\}$ for some $s\leq 7$, we do as follows.
First, as vertex~$2$ dominates $F_1$, we observe that if $u$ has neighbours $v$ and $w$, then $G$ has an $F_1$-model~${\cal X}$ with $X_1=\{u\}$ and $v\in X_2$ if and only if $G$ has an $F_1$-model ${\cal X}$ with $X_1=\{u\}$ and $\{v,w\}\subseteq X_2$. 
Let $G'$ be that graph obtained from $G$ by contracting the set $V(D_i)$ to a singleton vertex $f_i$, for $i \in \{1, \ldots, s\}$. Let $F = \{f_1, \ldots, f_{s}\}$.
We note that $G'$ has an $F_1$-model ${\cal X}$ such that $X_1=\{u\}$ and $F\subseteq X_2$ if and only if $G$ has an 
$F_1$-model~${\cal X}^*$ such that $X_1^*=\{u\}$ and $X_2^*$ has a non-empty intersection with every component in~${\cal D}$. That is we now decide if $G'$ contains such an $F_1$-model ${\cal X}$.

We consider all $O(n^{12})$ options of choosing vertices/edges in $G'$ with the following properties. 
We choose edges $a_1b_1$, $a_2c_1$, $a_3d_1$, $b_2c_2$, $b_3d_2$ and $c_3d_3$ such that: 
the vertices $a_1,a_2,a_3$ will be contained in $X_2$;
the vertices $b_1,b_2,b_3 \notin N(u)$ and will be contained in $X_3$;
the vertices $c_1,c_2,c_3 \notin N(u)$ and will be contained in $X_4$; and
the vertices $d_1,d_2,d_3 \notin N(u)$ and will be contained in $X_5$.
We allow duplications within each of the sets $\{a_1,a_2,a_3,f_1,\ldots,f_s\}$, $\{b_1,b_2,b_3\}$, $\{c_1,c_2,c_3\}$ and $\{d_1,d_2,d_3\}$.

We remove all neighbours of $u$ not either in $F$ or guessed above, as none of these vertices can be contained the bags $X_2$, $X_3$ and $X_5$, as $1$ is anti-complete to $\{3,4,5\}$ in $H$.
Let $G''$ be the resulting graph. We set $Z_1=\{a_1,a_2,a_3,f_1,\ldots,f_s\}$, $Z_2=\{b_1,b_2,b_3\}$, $Z_3=\{c_1,c_2,c_3\}$ and $Z_4=\{d_1,d_2,d_3\}$.
This yields a $15$-bounded instance $(G'',Z_1,Z_2,Z_3,Z_4)$ of {\sc $4$-Disjoint Connected Subgraphs}.
By Theorem~\ref{t-kdisjoint}, we find in polynomial time whether $V(G'')$ contains four pairwise disjoint connected sets $S_1,\ldots, S_4$ with
$Z_i\subseteq S_i$ for every $i\in \{1,\ldots,4\}$. 
If such sets exist, then we find an $F_1$-model ${\cal X}$ in $G'$ such that $X_1=\{u\}$, $X_2=S_1$, $X_3=S_2$, $X_4=S_3$ and $X_5=S_4$. As observed above, it follows that $G$ contains $F_1$ as an induced minor.
Otherwise, we discard our guess, as there is no $F_1$-model in $G''$ such that those guessed vertices are contained in the sets as described.

The correctness of our algorithm follows from its description. Since we apply Theorem~\ref{t-kdisjoint} $O(n^{19})$ times, our algorithm runs in polynomial time. 
\end{proof}

\section{The Graph $\mathbf{F_2}$}\label{s-f2}

To prove that $F_2$-{\sc Induced Minor} is polynomial-time solvable, we show a structural lemma.

\begin{lemma}\label{l:k4-sub}
If a graph $G$ contains $F_2$ as an induced minor, then 
$G$ has an $F_2$-model ${\mathcal X}$ such that:
\begin{itemize}
\item [(i)] $X_1=\{u\}$ for some $u\in V(G)$; 
\item [(ii)] $X_2=\{r\}$ for some $r\in V(G)$;
\item [(iii)] $u$ has at most two neighbours in $X_5$
\item [(iv)] $r$ has at most two neighbours in $X_3$ and at most two neighbours in $X_4$.
\end{itemize}
\end{lemma}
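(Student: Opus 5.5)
The plan is to take an extremal model and show that any violation of (i)--(iv) yields a model that is better with respect to the chosen order. This mirrors Lemma~\ref{l:diamond-bags}.

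I read $F_2$ from Figure~\ref{fig:h2} as follows; I cannot fully confirm this. Vertex~$1$ has neighbours $2$ and $5$, and vertices $2$ and $5$ are both complete to $\{3,4\}$, with $34\in E(F_2)$. Under this reading, $1$ has degree~$2$, and $2,3,4,5$ each have degree~$3$.

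Choose ${\cal X}$ to be a minimum $F_2$-model. Among all minimum $F_2$-models, first minimise $|X_1|$, then $|X_2|$, and then the quantity $|N(X_1)\cap X_5|+|N(X_2)\cap X_3|+|N(X_2)\cap X_4|$. Since ${\cal X}$ is minimum it is bag-minimal, so Lemma~\ref{l:general} applies to every bag.

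For (i), Lemma~\ref{l:general}(ii) says $G_1$ is a path from the unique $X_2$-attachment $p$ to the unique $X_5$-attachment $p'$. Suppose $p\neq p'$. Then $p$ has neighbours only in $X_1\cup X_2$, since $1$ is non-adjacent to $3$ and $4$ and $p$ is not an $X_5$-attachment. Moving $p$ into $X_2$ gives a model of the same total size with smaller $|X_1|$, a contradiction. So $X_1=\{u\}$.

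For (ii), the bag $X_2$ has neighbours $1,3,4$. By Lemma~\ref{l:general}(iii), $G_2$ is a single vertex, a path, or a (triangle) tripod whose ends are unique attachments. In the path and tripod cases I will push an end into a neighbouring bag to shrink $X_2$, keeping the total size and $|X_1|$ fixed.
\begin{itemize}
\item The preferred move takes the end that is the unique $X_3$- or $X_4$-attachment and is non-adjacent to $u$.
\item If that end is adjacent to $u$, it may be the unique $X_1$-attachment. In that case I instead move $u$ into $X_2$ and make the end the new singleton $X_1$. This is allowed only if the end has no neighbour in $X_3\cup X_4$ other than via $X_2$, and the degree constraints have to be checked.
\end{itemize}
This case analysis is the step I expect to be the main obstacle. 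The difficulty is that $X_2$ touches three bags, and a chosen end may be simultaneously an attachment to $X_1$ and to $X_3$. If this step resists, I would drop the tie-breaking on $|X_2|$. Instead I would argue directly, as in the first swap of Lemma~\ref{l:diamond-bags}, that either $|X_2|=1$ or a strictly smaller model exists, by discarding interior vertices of $G_2$.

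For (iii), suppose $u$ has at least three neighbours in $X_5$. By Lemma~\ref{l:general}(iii), $G_5$ is then a path from the unique $X_3$-attachment $a$ to the unique $X_4$-attachment $b$. Take $r_1$ and $r_2$ to be the neighbours of $u$ closest to $a$ and $b$, and let $q_1$ be the successor of $r_1$ on $r_1G_5r_2$. Perform the swap from the second part of Lemma~\ref{l:diamond-bags} with the roles adapted:
\begin{itemize}
\item the new $X_1$ is $\{q_1\}$, which is adjacent to $u$ and to the rest of $X_5$;
\item $u$ joins one side of the split path, and the two halves are reassigned so that the new $X_5$ contains $u$ together with $aG_5r_1$;
\item the remaining half $q_2G_5b$ is absorbed into $X_4$.
\end{itemize}
It must be checked that $X_2$ is still adjacent to the new $X_5$ (it is, via $u$) and that no forbidden adjacencies appear, using that $u$ is anti-complete to $X_3\cup X_4$. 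The new model has the same size and $|X_1|=1$, but strictly decreases the neighbour-count quantity, a contradiction.

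For (iv), the same swap applies to $r$ inside $X_3$, which by Lemma~\ref{l:general} is a path between its $X_4$- and $X_5$-attachments. The symmetric swap applies to $r$ inside $X_4$. In these swaps the new singleton $X_2$ is $\{q_1\}$ and $r$ is absorbed into a neighbouring bag.
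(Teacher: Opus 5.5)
Your argument for (i) is fine. The swaps you propose for (ii)--(iv), however, do not yield $F_2$-models, so there is a genuine gap. By your own (correct) reading, vertex $1$ is non-adjacent to $3$ and $4$, and $2$ is non-adjacent to $5$.

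\textbf{(iii).} The new $X_1=\{q_1\}$ comes from the old $X_5$, so it has no neighbour in the unchanged $X_2$. It is also adjacent to $q_2$, which you put into $X_4$. Placing $u$ in the new $X_5$ creates an $X_2$--$X_5$ edge: you check that this adjacency exists, but it must be absent.

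\textbf{(iv).} The new $X_2=\{q_1\}$ is taken from $X_3$, so it has no neighbour in $X_1=\{u\}$.

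The kite swap does not transfer. In the kite, vertex $5$ is adjacent exactly to $3$ and $4$, which are adjacent to each other, so a vertex of $G_3$ can take over the role of $X_5$. In $F_2$ the two neighbours $2,5$ of vertex $1$ are non-adjacent, so a vertex of $X_5$ cannot become the new $X_1$ while $X_2$ stays in place.

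\textbf{(ii).} Putting $u$ into $X_2$ creates an $X_2$--$X_5$ edge, and making an $X_3$-attachment the new $X_1$ creates an $X_1$--$X_3$ edge. Your preferred push does dispose of the tripod case. It fails, however, exactly in the configuration that survives all pushes. There $G_2$ is a path from the unique $X_1$-attachment $r_1$ to a vertex $r_2$ that is simultaneously the unique $X_3$- and the unique $X_4$-attachment, and moving $r_2$ into $X_3$ destroys the $X_2$--$X_4$ adjacency.

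Your fallback is false. Subdivide the edge between vertices $1$ and $2$ of $F_2$. The resulting $6$-vertex graph has no induced $F_2$, since deleting any vertex leaves at most $6<7$ edges. So the model with $|X_2|=2$ is minimum and no strictly smaller model exists. A size-preserving swap together with tie-breaking is therefore unavoidable.

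What you are missing are swaps that reassign the roles of bags globally.

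\textbf{For (ii).} Take the configuration above with $r_1\neq r_2$, and let $r_2'$ be the neighbour of $r_2$ on $G_2$. The paper sets $X_1'=\{r_2'\}$ and $X_2'=\{r_2\}$, and moves $u$ together with $X_2\setminus\{r_2,r_2'\}$ into $X_5$. This keeps the total size and $|X_1|=1$, but shrinks $X_2$. To reach that configuration, the paper also excludes the case where both ends of the path $G_2$ are $X_1$-attachments. This cannot hold for both $X_2$ and $X_5$, since a swap then gives a strictly smaller model. The automorphism of $F_2$ exchanging $2$ and $5$, together with the priority on $|X_2|$, then lets one assume it fails for $X_2$.

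\textbf{For (iii).} Suppose $u$ has neighbours $w_1,w,w_2$ in this order on the path $G_5$ between its unique $X_3$- and $X_4$-attachments. Take $X_3\cup X_4$ as the new $X_1$, $\{w\}$ as the new $X_3$, $\{u\}$ as the new $X_4$, and the two components of $G_5-w$ as the new $X_2$ and $X_5$. This is an $F_2$-model avoiding the old $X_2$.

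\textbf{For (iv).} Suppose $r$ has three neighbours on a suitably chosen shortest path $P$ in $G[X_3\cup X_4]$ between $X_5$-attachments. The paper makes $X_5$ the new $X_1$, $\{r\}$ the new $X_3$, the middle neighbour the new $X_4$, and the two remaining subpaths of $P$ the new $X_2$ and $X_5$. This model avoids $u$.

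Both of these give strictly smaller models, so they are driven by minimality of the total size rather than by your neighbour-count tie-breaker.
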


\begin{proof}
Let $G$ be a graph that contains $F_2$ as an induced minor.
Let ${\cal X}$ be a minimum $F_2$-model in a graph $G$ that, over all minimum $F_2$-models in $G$,
minimises $|X_1|$ and subject to this, minimises 
$|X_2|$ and subject to this, minimises $|X_5|$. As ${\cal X}$ is minimum,
${\cal X}$ is bag-minimal. As the only neighbours of vertex~$1$ in $F_2$ are $2$ and $5$, we find that $G_1$ is a path from a unique $X_2$-attachment vertex $u$ to a unique $X_5$-attachment vertex $u'$ in $X_1$ by Lemma~\ref{l:leaf}. If $u\neq u'$,
then moving $X_1\setminus \{u\}$ to $X_5$ yields a minimum $F_2$-model ${\cal X}'$ in $G$ with $|X_1'|<|X_1|$, a contradiction. Hence, $X_1=\{u\}$, which proves (i).

We now consider $X_2$.
We first claim that there are at most two unique attachment vertices in $X_2$. Suppose for contradiction that there are three. Let $u_1$ be the unique $X_1$-attachment vertex, $u_2$ be the unique $X_3$-attachment vertex, and $u_3$ be the unique $X_4$-attachment vertex in $X_2$. Since $u_2$ is anti-complete to $X_1$, 
moving $u_2$ from $X_2$ to $X_3$ yields a minimum $F_2$-model~${\cal X}'$ in $G$ with $|X_1'|=|X_1|$ but
$|X_2'|<|X_2|$, contradicting our choice of ${\cal X}$. Hence, there are at most two unique attachment vertices in $X_2$. We now apply Lemma~\ref{l:leaf} to find that every spanning tree $T$ of $G_2$ is a path. That is, $G_2$ is either a path or a cycle. If $G_2$ is a cycle, then for each $\ell \in X_2$, there is a spanning tree of $G_2$, in which $\ell$ is a leaf. From Lemma~\ref{l:leaf} it now follows that $\ell$ is a unique attachment vertex in $X_2$. As $G_2$ is a cycle, $|X_2| \geq 3$; a contradiction, since $X_2$ has at most two unique attachment vertices. Hence, $G_2$ is a path.

\begin{figure}[b]
    \centering
    \includegraphics[width=0.7\linewidth, page=11]{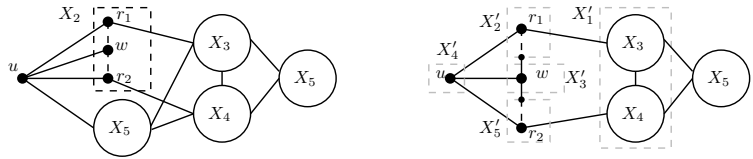}
    \caption{The first ``swap'' in the proof of Lemma~\ref{l:k4-sub}. Note $X_5$ is not present in the right model.}
    \label{fig:k4-at-most-2}
\end{figure}

Let $r_1$ and $r_2$ be the two ends of $G_2$ (with possibly $r_1=r_2$).
We say that $X_2$ has {\it type 1} if one of $\{r_1, r_2\}$ is the unique $X_1$-attachment vertex in $X_2$, and $X_2$ has {\it type 2} otherwise. In what follows we analyse the structure of $X_2$ in both cases.
By Lemma~\ref{l:leaf}, we may assume that without loss of generality one of the following holds:

\begin{itemize}
\item $r_1$ is the unique $X_1$-attachment vertex in $X_2$, and $r_2$ is the unique $X_j$-attachment vertex in $X_2$ for some $j\in \{3,4\}$, say, $j=3$, or
\item $r_1$ is the unique $X_3$-attachment vertex in $X_2$, and $r_2$ is the unique $X_4$-attachment vertex in $X_2$.
\end{itemize}

\noindent
That is, if $X_2$ has type 1, then $r_2$ is the unique $X_3$-attachment vertex in $X_2$. Moreover, $r_2$ is also the unique $X_4$-attachment vertex in $X_2$, else we can move $r_2$ from $X_2$ to $X_3$ to obtain a minimum $F_2$-model 
${\cal X}'$ in $G$ with $|X_1'|=|X_1|$ but $|X_2'|<|X_2|$, contradicting our choice of~${\cal X}$.  

If $X_2$ has type 2 then it follows from the above that without loss of generality, $r_1$ is the unique $X_3$-attachment vertex and $r_2$ is the unique $X_4$-attachment vertex in $X_2$. If $r_1$ is not an $X_1$-attachment vertex, then moving $r_1$ from $X_2$ to $X_3$ yields a minimum $F_2$-model~${\cal X}'$ in $G$ with $|X_1'|=|X_1|$ but
$|X_2'|<|X_2|$, contradicting our choice of ${\cal X}$. Hence, $r_1$, and symmetrically, $r_2$ are both $X_1$-attachment vertices. That is, $r_1,r_2 \in N(u)$.

We now claim that if $X_2$ has type 2, then $N(u)= \{r_1,r_2\}$.
Suppose that $u$ has some third neighbour $w$ in $X_2$. Let $X_1=X_3\cup X_4$, $X_2' = V(r_1X_2w) \setminus \{w\}$, $X_3' = \{w\}$, $X_4' = \{u\}$ and $X_5' = V(wX_2r_2)\setminus \{r_2\}$. As $w$ is not adjacent to any vertex in $X_3 \cup X_4$, this yields another $F_2$-model ${\cal X}'$ in $G$;
see Figure~\ref{fig:k4-at-most-2}. Since no bag of $\mathcal{X}'$ contains a vertex from $X_5$, we find that ${\cal X}'$ is smaller than ${\cal X}$, a contradiction. We conclude that $r_1$ and $r_2$ are the only neighbours of $u$ in $X_2$. 

\begin{figure}
    \centering
    \includegraphics[width=0.7\linewidth, page=12]{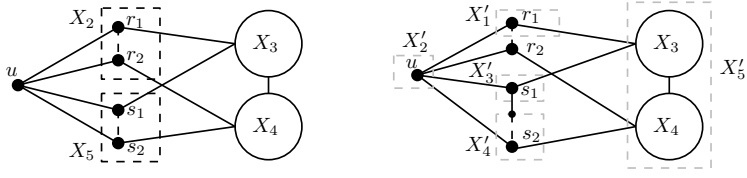}
    \caption{The second ``swap'' in the proof of Lemma~\ref{l:k4-sub}. Note $r_2$ is not used in the right model.}
    \label{fig:k4-2-then-1}
\end{figure}

By symmetry, we can also say that $X_5$ has either type~1 or~2. So, in particular, $X_5$ is also a path, say with ends $s_1$ and $s_2$. 
Suppose both $X_2$ and $X_5$ are of type~2, meaning that $N(u)\cap X_2=\{r_1,r_2\}$ and $N(u)\cap X_5=\{s_1,s_2\}$. We set $X_1' = \{r_1\}$, $X_2' = \{u\}$, $X_3' = \{s_1\}$, $X_4' = X_5 \setminus \{s_1\}$, $X_5' = X_3 \cup X_4$. 
 This yields another $F_2$-model ${\cal X}'$ in $G$;
 see Figure~\ref{fig:k4-2-then-1}. Since $r_2$ is not in any bag of ${\cal X}'$, we find that ${\cal X}'$ is smaller than ${\cal X}$, a contradiction.
We conclude that one of $X_2, X_5$, say $X_2$, is of type~1. As ${\cal X}$ minimises $|X_2|$ with priority, $X_2$ has type 1. Further, since $X_5$ is either of type~1 or of type~2, our analysis above gives that $u$ has at most two neighbours in $X_5$. Hence, we have proven (iii).
 
 We will now prove (ii). As $X_2$ is of type~1, $G_2$ is a path from $r_1$ to $r_2$, where $r_1$ is the unique $X_1$-attachment vertex and $r_2$ is the unique $X_j$-attachment vertex for each $j\in \{3,4\}$. If $r_1 \neq r_2$, then let $r_2'$ be the neighbour of $r_2$ in $G_2$. After moving the vertices $(V(X_2)\setminus \{r_2,r_2'\})\cup \{u\}$ to $X_5$ and replacing $u$ by $r_2'$, we obtain a new $F_2$-model ${\cal X}'$ such that $|X_1'| = 1$ and $|X_2'| < |X_2|$, thus contradicting our choice of ${\cal X}$. Hence, $r_1 = r_2$. This proves~(ii).

 \begin{figure}
    \centering
    \includegraphics[width=0.45\linewidth, page=13]{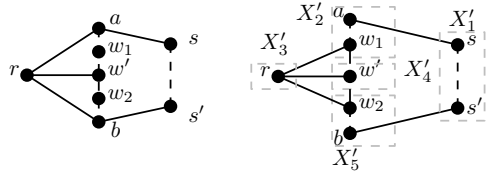}
    \caption{The third swap in the proof of Lemma~\ref{l:k4-sub}. Note $u$ is not used in the right $F_2$-model.}
    \label{fig:k4-small-nei}
\end{figure}

 Finally, we prove (iv). Let $s$ be an $X_3$-attachment vertex in $X_5$, say $s$ is adjacent to $a\in X_3$. Let $s'$ be an $X_4$-attachment vertex in $X_5$, say $s'$ is adjacent to $b\in X_4$. Let $P$ be a shortest path from $a$ to $b$ in $G[X_3\cup X_4]$. Note that possibly $s = s'$. We further choose $a$ and $b$ such that $\{s,s'\}$ is anti-complete to the internal vertices of $P$.
  We now claim that $r$ has at most two neighbours on $P$. For a contradiction, assume $r$ has neighbours $w_1, w', w_2$ on $P$ in this order. That is, $w'$ lies on the subpath $w_1Pw_2$. We set $X_1' = X_5$, $X_2' = V(aPw' )\setminus \{w'\}$, $X_3' = \{r\}$, $X_4' = \{w'\}$ and $X_5' = V(w'Pb) \setminus \{w'\}$. This yields another $F_2$-model ${\cal X}'$ in $G$; 
see Figure~\ref{fig:k4-small-nei}.
Since $u$ is not in any bag of $\mathcal{X}'$, we find that ${\cal X}'$ is smaller than ${\cal X}$, a contradiction. Hence, $r$ has at most two neighbours on $P$. As $\cal X$ is minimum, if $N(r) \cap V(P) \cap X_3 \neq \emptyset$ and $N(r) \cap V(P) \cap X_4 \neq \emptyset$, then $X_3 \cup X_4 = V(P)$ and so, in this case, (iv) holds. This leaves the case where, without loss of generality, $N(r) \cap V(P) \cap X_4 = \emptyset$. Let $w_3$ be a neighbour of $r$ closest to $P$ in $G_4$. We then take a shortest path $Q$ from $w_3$ to a vertex on $P$. As $\cal X$ is minimum, $X_3\cup X_4=V(P)\cup V(Q)$, hence, (iv) holds, and so we have proven the lemma.
\end{proof}

\noindent
We reduce again to a polynomial number of $4$-bounded instances of $3$-{\sc Disjoint Subgraphs}.

\begin{theorem}\label{t-f1}
$F_2$-{\sc Induced Minor} is polynomial-time solvable.
\end{theorem}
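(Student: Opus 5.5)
We follow the scheme already used for $\mathbb{K}$ and $F_1$: use the structural Lemma~\ref{l:k4-sub} to guess a constant number of vertices and edges, then reduce to polynomially many bounded instances of $3$-{\sc Disjoint Subgraphs}, solved by Theorem~\ref{t-kdisjoint}. We read the structure of $F_2$ off the proof of Lemma~\ref{l:k4-sub}. Vertex~$1$ is adjacent to exactly $2$ and~$5$. Vertex~$2$ is adjacent to $1,3,4$, and vertex~$5$ to $1,3,4$. Vertices~$3$ and~$4$ are adjacent. So $1$ is anti-complete to $\{3,4\}$, and $2$ is non-adjacent to~$5$.

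By Lemma~\ref{l:k4-sub}, $G$ contains $F_2$ as an induced minor if and only if $G$ has an $F_2$-model with $X_1=\{u\}$, $X_2=\{r\}$, $|N(u)\cap X_5|\le 2$, $|N(r)\cap X_3|\le 2$ and $|N(r)\cap X_4|\le 2$. We branch over the following choices.
\begin{itemize}
\item An edge $ur$, giving $X_1=\{u\}$ and $X_2=\{r\}$.
\item At most two vertices $d_1,d_2\in N(u)\setminus N[r]$, which will form $N(u)\cap X_5$.
\item At most two vertices $b_1,b_2\in N(r)\setminus N[u]$, which will form $N(r)\cap X_3$.
\item At most two vertices $c_1,c_2\in N(r)\setminus N[u]$, which will form $N(r)\cap X_4$.
\item Edges $b_3d_3$, $c_3d_4$ and $b_4c_4$ realising the adjacencies $35$, $45$ and $34$.
\end{itemize}
Duplicates are allowed within each group, but the three groups must be pairwise disjoint. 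This gives $O(n^{14})$ guesses.

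For each guess, let $G'$ be obtained from $G$ by deleting $u$, $r$, all unguessed vertices of $N(u)$, and all unguessed vertices of $N(r)$. Deleting these vertices is safe: the guesses prescribe the whole of $N(u)\cap(X_3\cup X_4\cup X_5)$ and of $N(r)\cap(X_3\cup X_4\cup X_5)$. We also require the $b$'s, $c$'s and $d$'s to avoid $N(u)$ or $N(r)$ wherever the model forbids it. Concretely, the guessed vertices of $X_3$ and $X_4$ must avoid $N(u)$, and the guessed vertices of $X_5$ must avoid $N(r)$. Set $Z_1=\{b_1,\dots,b_4\}$, $Z_2=\{c_1,\dots,c_4\}$ and $Z_3=\{d_1,\dots,d_4\}$. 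This is a $4$-bounded instance of $3$-{\sc Disjoint Subgraphs}, so Theorem~\ref{t-kdisjoint} applies.

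For soundness, suppose the instance has a solution $S_1,S_2,S_3$ in $G'$. Set $X_1=\{u\}$, $X_2=\{r\}$, $X_3=S_1$, $X_4=S_2$ and $X_5=S_3$. The required adjacencies come from the guessed edges, together with $u\sim d_1$, $r\sim b_1$ and $r\sim c_1$. The required non-adjacencies hold for two reasons. First, $u$ and $r$ only see guessed vertices in $G'$, and these were chosen in the correct groups. Second, $S_1,S_2,S_3$ are pairwise disjoint. For completeness, the model provided by Lemma~\ref{l:k4-sub} is consistent with one of the guesses, and its bags restricted to $G'$ form a solution. The main subtlety is consistency between the two deletions: a vertex guessed in $N(u)$ for $X_5$ must not be adjacent to $r$, and a vertex guessed in $N(r)$ for $X_3$ or $X_4$ must not be adjacent to $u$. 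Both conditions are already enforced by the choice sets above.

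The total running time is $O(n^{14})$ calls to a polynomial-time routine.
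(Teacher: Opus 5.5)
Your proposal is correct and is essentially the paper's proof. Both invoke Lemma~\ref{l:k4-sub}, guess $u$, $r$, the at most two neighbours of $u$ in $X_5$ and of $r$ in each of $X_3$ and $X_4$, plus one edge for each of the adjacencies $34$, $35$, $45$, delete the unguessed neighbours of $u$ and $r$, and solve a $4$-bounded instance of $3$-{\sc Disjoint Subgraphs} via Theorem~\ref{t-kdisjoint}. Your write-up is slightly cleaner than the paper's: you explicitly delete $u$ and $r$ from $G'$, you put all four guessed $X_5$-vertices into $Z_3$ (the paper writes $Z_3=\{s_1,s_2\}$), and you correctly count the guesses as $O(n^{14})$ rather than $O(n^{12})$.
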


\begin{proof}
Let $G$ be a graph. From Lemma~\ref{l:k4-sub}, it follows that if $G$ has $F_2$ as an induced minor, then $G$ has an $F_2$-model ${\cal X}$ with properties (i)--(iv). We therefore consider all $O(n^{12})$ options for choosing the vertex $u$ that forms $X_1$; a neighbour~$r$ of $u$ that forms $X_2$, 
at most two vertices $s_1,s_2 \in N(u) \setminus N(r)$ that will be contained in $X_5$; 
at most two vertices $a_1,a_2 \in N(r) \setminus N(u)$ that will be contained in $X_3$;
at most two vertices $b_1,b_2 \in N(r) \setminus N(u)$ that will be contained in $X_4$;
an edge $a_3s_3$ such that $a_3 \notin N(u)$ and will be contained in $X_3$ and $s_3 \notin N(r)$ and will be contained in $X_5$;
an edge $b_3s_4$ such that $b_3 \notin N(u)$ and will be contained in $X_4$ and $s_4 \notin N(r)$ and will be contained in $X_5$;
and an edge $a_4b_4$ such that $a_4 \notin N(u)$ and will be contained in $X_3$ and $b_4 \notin N(r)$ and will be contained in $X_4$.
The sets $\{a_1,\ldots,a_4\}$, $\{b_1,\ldots,b_4\}$, $\{s_1,\ldots,s_4\}$ must be pairwise disjoint, but we not require that the vertices $a_1,\ldots,a_4$ are pairwise distinct, likewise we allow duplications in the vertices $b_1,\ldots,b_4$ and the vertices $s_1,\ldots,s_4$.

We now consider that graph $G'$ obtained from $G$ after removing all neighbours of $u$ and $r$ not guessed above. We do this as no neighbours of $u$ can be placed in $X_3$ and $X_4$, and no neighbours of $r$ can be placed in $X_5$. Further, from (iv) and (v), $N(u) \cap X_5 = \{s_1,s_2\}$ and $N(r) \cap (X_3 \cup X_4) = \{a_1,a_2,b_1,b_2\}$. Let $Z_1=\{a_1,\ldots,a_4\}$, $Z_2=\{b_1,\ldots,b_4\}$ and $Z_3=\{s_1,s_2\}$. As $(G',Z_1,Z_2,Z_3)$ is a $4$-bounded instance of {\sc $3$-Disjoint Connected Subgraphs}, by Theorem~\ref{t-kdisjoint}, we decide in polynomial time whether $V(G')$ contains three pairwise disjoint connected sets $S_1, S_2, S_3$ with $Z_i\subseteq S_i$ for every $i\in \{1,2,3\}$. If not, then we discard our guess as there is no $F_2$-model in $G'$ such that those guessed vertices are contained in the corresponding sets. Otherwise, we set $X_1=\{u\}$, $X_2=\{r\}$, $X_3=S_1$, $X_4=S_2$ and $X_5=S_3$. By construction, ${\cal X}$ is an $F_2$-model of $G$.

The correctness of our algorithm follows from its description. Since we apply Theorem~\ref{t-kdisjoint} $O(n^{12})$ times, our algorithm runs in polynomial time. 
\end{proof}

\section{The Graph $\mathbb{H}_2$}\label{s-h2}

We start with a structural lemma that follows immediately from Lemma~\ref{l:general}.

\begin{lemma}\label{l:bag-min-h}
For any bag-minimal $\mathbb{H}_2$-model $\mathcal X$ in a graph $G$, the following holds: 
\begin{itemize}    
    \item [(i)] $|X_{1}|=|X_{3}| = |X_{5}| = |X_{7}| = 1$.   
    \item [(ii)] $G_4$ is a path from unique $X_2$-attachment~$x$ in $X_4$ to a unique $X_6$-attachment~$y$ in $X_4$.
\end{itemize}
\end{lemma}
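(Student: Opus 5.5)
The plan is to read off both parts of the statement directly from Lemma~\ref{l:general}, once the vertex labelling of $\mathbb{H}_2$ from Figure~\ref{fig:h2} is fixed. I will assume the labelling consistent with the statement. The middle path is $2,4,6$, so vertex $4$ is the subdivision vertex of the middle bar. Vertex $2$ is adjacent to the leaves $1$ and $3$, and vertex $6$ is adjacent to the leaves $5$ and $7$.

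Under this labelling, the degrees in $\mathbb{H}_2$ are as follows. Vertices $1,3,5,7$ are leaves, so each has exactly one neighbour (namely $2$ or $6$). Vertex $4$ has exactly two neighbours, $2$ and $6$. Vertices $2$ and $6$ each have degree~$3$, but they play no role in the statement.

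For part~(i), I apply Lemma~\ref{l:general}(i) to each $y\in\{1,3,5,7\}$. Since ${\cal X}$ is bag-minimal and $y$ has exactly one neighbour in $\mathbb{H}_2$, we get $|X_y|=1$.

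For part~(ii), I apply Lemma~\ref{l:general}(ii) to $y=4$ with $z_1=2$ and $z_2=6$. It gives that $G_4$ is a path, possibly on a single vertex, from a unique $X_2$-attachment vertex $x$ in $X_4$ to a unique $X_6$-attachment vertex $y$ in $X_4$. This is exactly statement~(ii).

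There is no real obstacle here. The only point to check is that the labelling in the figure matches the one above, which is forced by the statement itself: leaves labelled $1,3,5,7$ and vertex $4$ adjacent to $2$ and $6$. Once that is confirmed, both parts are immediate instances of Lemma~\ref{l:general}.
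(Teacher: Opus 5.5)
Your proposal is correct and matches the paper, which simply states that this lemma follows immediately from Lemma~\ref{l:general}. With the labelling you identify (leaves $1,3$ on vertex $2$, leaves $5,7$ on vertex $6$, and vertex $4$ adjacent only to $2$ and $6$), part~(i) of Lemma~\ref{l:general} gives~(i) and part~(ii) gives~(ii).
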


\noindent
\noindent
An $\mathbb{H}_2$-model~{$\cal X$} in a graph $G$ is \emph{small} if $|X_2|\leq 2$ or $|X_6|\leq 2$. Lemma~\ref{l:bag-min-h} has the following consequence:

\begin{corollary}\label{c-x4}
If a graph $G$ has a small $\mathbb{H}_2$-model, then $G$ has a small $\mathbb{H}_2$-model ${\cal X}$ with $|X_1|=|X_3|=|X_4|=|X_5|=|X_7|=1$.
\end{corollary}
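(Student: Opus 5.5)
The plan is to choose a small model of least total size and show that it can be rearranged so that $X_4$ becomes a single vertex. Throughout I use the labelling forced by Lemma~\ref{l:bag-min-h}: vertices $1,3,5,7$ are the leaves of $\mathbb{H}_2$, vertex $2$ is adjacent to $1,3,4$, vertex $6$ is adjacent to $4,5,7$, and $4$ is adjacent only to $2$ and $6$.

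\textbf{Step 1: a minimum small model is bag-minimal.} Among all small $\mathbb{H}_2$-models in $G$, pick ${\cal X}$ minimising $\sum_y |X_y|$. Let ${\cal X}'$ be any $\mathbb{H}_2$-model properly contained in ${\cal X}$. Then $|X'_2|\le |X_2|$ and $|X'_6|\le |X_6|$, so ${\cal X}'$ is again small. Hence no smaller model is contained in ${\cal X}$, and ${\cal X}$ is bag-minimal. Lemma~\ref{l:bag-min-h} now gives $|X_1|=|X_3|=|X_5|=|X_7|=1$. It also gives that $G_4$ is a path from the unique $X_2$-attachment $x$ to the unique $X_6$-attachment $y$ in $X_4$.

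\textbf{Step 2: shrink $X_4$ by moving vertices into the bag that is not required to stay small.} Suppose $|X_4|\ge 2$; by symmetry of $\mathbb{H}_2$, assume $|X_2|\le 2$. Define ${\cal X}'$ by $X'_4=\{x\}$ and $X'_6=X_6\cup (X_4\setminus\{x\})$, leaving all other bags unchanged.

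\begin{itemize}
\item \emph{Connectivity.} $X'_6$ is connected, because $X_4\setminus\{x\}$ is a subpath of $G_4$ ending at $y$, and $y$ has a neighbour in $X_6$.
\item \emph{Edge $\{4,6\}$.} $X'_4$ and $X'_6$ are adjacent, because $x$ is adjacent to its successor on the path $G_4$.
\item \emph{No new edges.} The vertices of $X_4\setminus\{x\}$ have no neighbours in $X_2$, since $x$ is the unique $X_2$-attachment. They also have no neighbours in $X_1,X_3,X_5,X_7$, since vertex $4$ is adjacent to none of $1,3,5,7$ in $\mathbb{H}_2$. So $X'_6$ gains no adjacency to $X_1,X_2,X_3$, and in particular not to $X_1$ or $X_3$, to which $6$ is non-adjacent.
\item \emph{Old edges kept.} $X'_6\supseteq X_6$ is still adjacent to $X_5$ and $X_7$.
\end{itemize}

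Thus ${\cal X}'$ is an $\mathbb{H}_2$-model. It is still small, since $X'_2=X_2$. If instead only $|X_6|\le 2$ holds, do the symmetric move: set $X'_4=\{y\}$ and $X'_2=X_2\cup(X_4\setminus\{y\})$.

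\textbf{Step 3: conclude.} The rearrangement in Step~2 changes none of the bags $X_1,X_3,X_5,X_7$, so they remain singletons. Hence ${\cal X}'$ is a small model with $|X_1|=|X_3|=|X_4|=|X_5|=|X_7|=1$, as required.

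The only point needing care is Step~2: we must add $X_4\setminus\{x\}$ to the bag that is not the one witnessing smallness, so that ${\cal X}'$ stays small.
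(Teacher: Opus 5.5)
Your proof is correct and follows the paper's argument: pass to a bag-minimal small model, apply Lemma~\ref{l:bag-min-h}, and move $X_4\setminus\{x\}$ into $X_6$, using that $x$ is the unique $X_2$-attachment in $X_4$. You also send $X_4$ into the bag that does not witness smallness, with the symmetric move when only $|X_6|\le 2$ holds; this settles a point the paper leaves implicit, since it always moves into $X_6$ and so tacitly assumes $|X_2|\le 2$.
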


\begin{proof}
Let $G$ be a graph that has a small $\mathbb{H}_2$-model ${\cal X}$. 
We may assume that ${\cal X}$ is bag-minimal, as deleting a vertex from a bag results in a new small $\mathbb{H}_2$-model in $G$.
From Lemma~\ref{l:bag-min-h}-(i), it now follows that $|X_1|=|X_3|=|X_5|=|X_7|=1$. Suppose $|X_4|\geq 2$. By Lemma~\ref{l:bag-min-h}-(ii), we find that $G_4$ is a path from a unique $X_2$-attachment~$x$ in $X_4$ to a unique $X_6$-attachment~$y$ in $X_4$. Since $x$ is the unique $X_2$-attachment in $X_4$, we can move all vertices of $X_4\setminus \{x\}$ to $X_6$ to obtain a small $\mathbb{H}_2$-model ${\cal X}'$ in $G$ with $|X_1'|=|X_3'|=|X_4'|=|X_5'|=|X_7'|=1$.
\end{proof}

\noindent
We use Corollary~\ref{c-x4} in the proof of the following lemma:

\begin{lemma}\label{lem-small-model}
It is possible to check in $O(n^9)$ time if a graph $G$ has a small $\mathbb{H}_2$-model.
\end{lemma}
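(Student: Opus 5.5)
By the automorphism of $\mathbb{H}_2$ that swaps the two halves ($1\leftrightarrow 5$, $2\leftrightarrow 6$, $3\leftrightarrow 7$, fixing $4$), it suffices to decide whether $G$ has a small $\mathbb{H}_2$-model with $|X_2|\leq 2$. The other case is handled by running the same procedure with the roles of $X_2$ and $X_6$ exchanged. In $\mathbb{H}_2$, vertex $2$ is adjacent to $1,3,4$, vertex $6$ is adjacent to $4,5,7$, and $4$ is adjacent to $2$ and $6$.

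The first step is to fix the shape of the model. Suppose $G$ has a small model with $|X_2|\leq 2$. The reduction in Corollary~\ref{c-x4} only deletes vertices or moves vertices from $X_4$ into $X_6$, so it never enlarges $X_2$. Hence we may assume $|X_1|=|X_3|=|X_4|=|X_5|=|X_7|=1$ and $|X_2|\leq 2$.

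The algorithm then branches over all $O(n^7)$ choices of:
\begin{itemize}
\item vertices $a,c,d,e,g$ forming $X_1,X_3,X_4,X_5,X_7$;
\item a set $B$ of one vertex, or of two adjacent vertices, forming $X_2$.
\end{itemize}
For each choice, it first checks in constant time, given adjacency-matrix access, that these at most seven vertices are distinct. It then checks that every required adjacency and non-adjacency of $\mathbb{H}_2$ among the bags $\{a\},B,\{c\},\{d\},\{e\},\{g\}$ holds. In particular, $B$ is adjacent to $a$, $c$ and $d$, while $e$ and $g$ are anti-complete to $\{a,c\}\cup B$, and so on.

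It remains to decide whether a suitable $X_6$ exists. Such a bag must be connected and adjacent to $d$, $e$ and $g$. It must be anti-complete to $\{a,c\}\cup B$ and disjoint from all other bags. Adjacency between $X_6$ and $X_4,X_5,X_7$ is required rather than forbidden, so these are the only constraints. Let
\[
G' = G - \bigl(N[\{a,c\}\cup B]\cup\{d,e,g\}\bigr).
\]
The claim is that a valid $X_6$ exists if and only if some connected component of $G'$ contains a neighbour of $d$, a neighbour of $e$ and a neighbour of $g$. For the forward direction, any valid $X_6$ lies inside one component of $G'$. For the converse, that whole component is itself a valid $X_6$, since containing additional vertices creates no forbidden adjacency. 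For each guess this test takes $O(n+m)=O(n^2)$ time via a BFS that computes the components and marks which of them meet $N(d)$, $N(e)$ and $N(g)$.

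Correctness follows from the shape reduction above, and the running time is $O(n^7)\cdot O(n^2)=O(n^9)$. The only delicate point is confirming that the reduction in Corollary~\ref{c-x4} preserves the condition $|X_2|\leq 2$, rather than merely smallness in general. This holds because $X_2$ is untouched except possibly by deletions made to reach bag-minimality, and deletions cannot increase $|X_2|$.
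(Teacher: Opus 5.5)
Your proposal is correct and follows essentially the same route as the paper. Both use Corollary~\ref{c-x4} plus symmetry to reduce to singleton bags $X_1,X_3,X_4,X_5,X_7$ with $|X_2|\leq 2$, branch over $O(n^7)$ guesses, and then test in $O(n^2)$ time whether some component of the graph with the forbidden neighbourhoods removed meets $N(d)$, $N(e)$ and $N(g)$.

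Two of your details tighten the paper's looser write-up. You check explicitly that the corollary's reduction preserves $|X_2|\leq 2$. You also delete the guessed vertices $d,e,g$ (and $a,c,B$ via closed neighbourhoods) from $G'$, whereas the paper removes only non-guessed neighbours and writes $X_2=V(D)$ where it means $X_6=V(D)$.
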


\begin{proof}
Let $G$ be a graph. If $G$ has a small $\mathbb{H}_2$-model, then $G$ has a small
$\mathbb{H}_2$-model ${\cal X}$ with $|X_1|=|X_3|=|X_4|=|X_5|=|X_7|=1$ by Corollary~\ref{c-x4}. As ${\cal X}$ is small, by symmetry, we may assume that $|X_2|\leq 2$.
We therefore consider all $O(n^5)$ options of choosing vertices $x_1, x_3, x_4, x_5, x_7$ to form $X_1, X_3, X_4, X_5, X_7$, respectively. Afterwards, we consider all $O(n^2)$ options of choosing vertices $a_1$ and $a_2$ such that either $a_1=a_2$ or $a_1$ and $a_2$ are adjacent. We then let $X_2 = \{a_1,a_2\}$. 

We check in $O(n^2)$ time if
for all $i,j\in \{1,2,3,4,5,7\}$ with $i\neq j$, $X_i$ and $X_j$ are adjacent if and only if $ij\in E(\mathbb{H}_2)$. If not, we discard our guess. 
It remains to decide if there exists some $X_6 \subseteq V(G)$ such that $\cal X$ is an $\mathbb{H}_2$-model in $G$ with bags $X_1, \ldots, X_7$. Note that $X_6$ contains no neighbours of $x_1,x_3,a_1,a_2$. It follows that, if $\cal X$ contains some neighbour of $x_1,x_3,a_1,a_2$, then this is a guessed vertex.
Hence, we delete any non-guessed vertices that are neighbours of $x_1,x_3,a_1,a_2$. This yields a graph $G'$. We now check in $O(n^2)$ time if $G'$ contains a connected component $D$ that contains a neighbour of $x_4$, a neighbour of $x_5$ and a neighbour of $x_7$. If so, we set $X_2=V(D)$, which gives us a small $\mathbb{H}_2$-model ${\cal X}$ in $G'$ (and also $G$); otherwise we discard the guess as there is no $X_6$ such that $\cal X$ is an $\mathbb{H}_2$-model in $G$ with bags $X_1, \ldots, X_7$.
\end{proof}

\noindent
We now consider graphs $G$ with no small $\mathbb{H}_2$-model.  Let $P$ and $Q$ be two {\it mutually induced} paths on at least five vertices in $G$, that is, $V(P)\cap V(Q)=\emptyset$, and there is no edge with one end in $V(P)$ and the other in $V(Q)$. 
Let $x_1$ and $x_3$ be the ends of $P$. On $P$, let $a_1$ be the neighbour of $x_1$, and $a_2$ be the neighbour of $x_3$. Similarly, let $x_5$ and $x_7$ be the ends of $Q$, and on $Q$, let $b_1$ be the neighbour of $x_5$, and $b_2$ be the neighbour of $x_7$. We call the {\it $8$-tuple $(x_1, a_1, a_2, x_3, x_5, b_1, b_2, x_7)$} the {\it frame} of the pair $P,Q$. Note that since $P$ and $Q$ each have five vertices, the vertices $x_1, a_1, a_2, x_3, x_5, b_1, b_2, x_7$ are pairwise distinct. 

We say that a vertex $c\notin V(P)\cup V(Q)$ is a \emph{centre} for $P,Q$ if $c$ is complete to $(V(P)\cup V(Q))\setminus \{x_1,x_3,x_5,x_7\}$ and anti-complete  to
$\{x_1,x_3,x_5,x_7\}$. 
The reason for introducing these notions is the following: 
if $G$ has two mutually induced paths $P,Q$ on at least five vertices with centre~$c$, then $G$ contains
 an $\mathbb{H}_2$-model ${\cal X}$ with 
 $X_1=\{x_1\}$, $X_2=V(P)\setminus \{x_1,x_3\}$, 
 $X_3=\{x_3\}$, $X_4=\{c\}$, $X_5=\{x_5\}$, $X_6=V(Q)\setminus \{x_5,x_7\}$ and $X_7=\{x_7\}$.

We say that a pair of paths $P$ an $Q$ is {\it suitable} if $P$ and $Q$ are mutually induced paths on at least five vertices that have a center.
A suitable pair of paths $P, Q$ is {\it minimum} if there exist no suitable pair $P',Q'$ of $G$ with the same frame as $P,Q$, but for which 
 $|V(P')|+|V(Q')|<|V(P)|+|V(Q)|$.
 
For a frame ${\cal F}=(x_1, a_1, a_2, x_3, x_5, b_1, b_2, x_7)$, we let $G_{\cal F}$ be the {\it ${\cal F}$-reduced} graph obtained from $G$ after deleting
$x_1$, $x_3$, $x_5$, $x_7$, $b_1$ and $b_2$, together with all the neighbours of these vertices except for $a_1$ and $a_2$. So, $G_{\cal F} =G- (N[x_1, x_3, x_5, b_1, b_2, x_7] \setminus \{a_1, a_2\})$.

\begin{lemma}\label{lem-replacement}
    Let $G$ be a graph with no small $\mathbb{H}_2$-model that has a minimum suitable pair $P,Q$ with frame ${\cal F}=(x_1, a_1, a_2, x_3, x_5, b_1, b_2, x_7)$.
    Let $R$ be a shortest path from $a_1$ to $a_2$ 
    in $G_{\cal F}$. Let $P' = x_1 a_1 R a_2 x_3$. 
    It holds that $P',Q$ also form a minimum suitable pair of~$G$.
\end{lemma}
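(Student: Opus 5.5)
The plan is to check directly the three things needed for $P',Q$ to be a minimum suitable pair. These are: (a) $P'$ is an induced path on at least five vertices with the same frame; (b) $P'$ and $Q$ are mutually induced and have a centre; (c) $|V(P')|\le |V(P)|$. Then (c) together with the minimality of $P,Q$ gives minimality of $P',Q$, since both pairs have frame ${\cal F}$.

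\textbf{Step 1: $R$ exists and is no longer than the middle of $P$.} The subpath $a_1Pa_2$ lies in $G_{\cal F}$. Because $P$ is induced, its vertices other than $a_1,a_2$ are non-adjacent to $x_1,x_3$. Because $P,Q$ are mutually induced, no vertex of $P$ is adjacent to or equal to $x_5,b_1,b_2,x_7$. Hence $R$ exists and $|V(R)|\le |V(a_1Pa_2)|$, which gives (c).

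\textbf{Step 2: $P'$ is an induced path on at least five vertices.} $R$ is induced because it is a shortest path. By construction of $G_{\cal F}$, $x_1$ has $a_1$ as its only neighbour on $R$, and $x_3$ has $a_2$ as its only neighbour on $R$; also $x_1x_3\notin E(G)$. So $P'$ is an induced path whose frame is again ${\cal F}$. It remains to exclude $|V(R)|\le 2$, which would mean $a_1a_2\in E(G)$ (recall $a_1\neq a_2$). In that case, taking a centre $c$ of $P,Q$ gives the $\mathbb{H}_2$-model
\[
X_1=\{x_1\},\; X_2=\{a_1,a_2\},\; X_3=\{x_3\},\; X_4=\{c\},\; X_5=\{x_5\},\; X_6=V(Q)\setminus\{x_5,x_7\},\; X_7=\{x_7\}.
\]
This model is small, a contradiction. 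So $|V(P')|\ge 5$.

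\textbf{Step 3 (main obstacle): mutual inducedness and a centre.} $G_{\cal F}$ only removes neighbours of $x_5,b_1,b_2,x_7$. So two things are not automatic: that $R$ has no neighbour among the inner vertices of $b_1Qb_2$, and that some vertex is complete to $V(R)\cup (V(Q)\setminus\{x_5,x_7\})$.

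I would try to show that the old centre $c$ still works, arguing by contradiction each time via a small model or a smaller suitable pair.

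For the centre, suppose some $r\in V(R)$ is not adjacent to $c$. Take a maximal subpath $S$ of $R$ avoiding $N(c)$, with $s,t$ the neighbours of its ends on $R$, so $s,t\in N(c)$. The aim is to reroute through $c$ and exhibit an $\mathbb{H}_2$-model with $X_2$ of size at most~$2$. The natural candidates are $X_2=\{s\}$ or $\{s,t\}$, with $c$ and pieces of $R$ moved into $X_4$ or $X_6$. If that fails, the alternative is to build a suitable pair with frame ${\cal F}$ that is strictly shorter than $P,Q$.

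For the inducedness, suppose $r\in V(R)$ is adjacent to an inner vertex $q$ of $Q$. Then $x_1a_1Rr$ followed by $q$ and parts of $Q$ should give either a small model (with $X_2$ or $X_6$ a single edge) or a shortcut of $Q$ contradicting minimality.

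I expect this case analysis, specifically finding the right re-partition of $R\cup V(Q)\cup\{c\}$ into bags, to be the hardest part of the proof.
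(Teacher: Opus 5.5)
\textbf{Verdict: there is a genuine gap.} Your Steps 1 and 2 are fine. They even make explicit two checks the paper skips: that $P'$ is induced with frame ${\cal F}$, and that $a_1a_2\notin E(G)$, since otherwise $\{a_1,a_2\}$ gives a small model. The gap is Step 3, which is the whole substance of the lemma. There you say what must be shown, but prove neither the mutual inducedness of $P',Q$ nor the existence of a centre, and the tactics you sketch are not the ones that work.

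\textbf{Mutual inducedness.} Let $x$ be the vertex of $P'$ closest to $a_1$ that has a neighbour in $Q$. Then $x$ is an inner vertex of $R$, and $x\notin V(Q)$, since otherwise its predecessor on $P'$ would qualify. By definition of $G_{\cal F}$, $x$ misses $x_1,x_3,b_1,b_2$, so it only sees inner vertices of $b_1Qb_2$. Now keep $Q$ intact as $X_5,X_6,X_7$, and build the other side from the \emph{original} path $P$, with a segment of $R$ as $X_4$.
\begin{itemize}
\item If $x$ has a neighbour on $P$: maximal runs of neighbours of $x$ on $P$ and on $b_1Qb_2$ give subpaths $\hat P,\hat Q$ with centre $x$. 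This yields a small model or a strictly smaller suitable pair.
\item Otherwise: let $y$ be the vertex of $a_1Rx$ closest to $x$ with a neighbour on $a_1Pa_2$. Then $y\neq a_1$, because the $R$-successor of $a_1$ sees $a_1$ and is not $x$. Since $a_1Pa_2$ is induced, there are two sub-cases. If $y$ has two non-adjacent neighbours $p_1,p_2$ there, take $X_1=\{p_1\}$, $X_2=\{y\}$, $X_3=\{p_2\}$, $X_4=V(yRx)\setminus\{y\}$. If instead $y$'s neighbours there are a single vertex or an edge $d_1d_2$, take $X_2=\{d_1,d_2\}$, its two outer $P$-neighbours as $X_1,X_3$, and $X_4=V(yRx)$. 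Both give small models.
\end{itemize}
Splicing $x_1a_1Rr$ onto $Q$, or shortcutting $Q$, is not this mechanism and cannot be in general: if $r$ has only one neighbour on $Q$, there is nothing to shortcut.

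\textbf{The centre.} Take the \emph{first} non-neighbour $z$ of $c$ on $R$, counted from $a_1$, and use the whole prefix $x_1P'z$. It is induced, its inner vertices are complete to $c$, and its ends are not. It is also mutually induced with $Q$ by the previous part, so that part must come first. If $x_1P'z$ has at most four vertices it gives a small model. Otherwise $x_1P'z,Q$ is a suitable pair with $|V(x_1P'z)|<|V(P')|\le|V(P)|$.

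Your candidate bags do not achieve this. $\{s,t\}$ is not connected, since $s,t$ are at distance at least $2$ on the induced path $R$. $X_2=\{s\}$ needs two non-adjacent neighbours of $s$ that $c$ misses, which $s$ need not have.

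\textbf{Which minimality is needed.} Your fallback, a shorter suitable pair \emph{with frame ${\cal F}$}, is not what arises. The truncated pair has frame $(x_1,a_1,z^-,z,x_5,b_1,b_2,x_7)$, where $z^-$ is the predecessor of $z$; likewise $\hat P,\hat Q$ above have a new frame. So the contradiction needs $P,Q$ to be minimum among \emph{all} suitable pairs of $G$. This holds in Theorem~\ref{th:detect}, where $P,Q$ come from a minimum $\mathbb{H}_2$-model, and $|V(P')|\le|V(P)|$ preserves it for the second application. The paper's own remark that $x_1P'z,Q$ has the same frame as $P,Q$ is inaccurate on this point.
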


\begin{proof}
    We use a proof by contradiction. Suppose that the paths $P'$ and $Q$ are not mutually induced. That is, there exists some vertex $x$ in $P'$ such that either $x\in V(Q)$ or $x$ has some neighbour in $Q$. We choose 
    that vertex $x$ with minimum distance to $a_1$ on $P'$. Observe that $x\notin V(Q)$, else the neighbour of $x$ in $xP'a_1$ would be closer to $a_1$ and still has a neighbour in $Q$, a contradiction. Hence, $x$ has some neighbour~$q$ in $Q$.
    It follows from the definition of $G_{\cal F}$ that $x$ is not a neighbour of either $x_5$ or $x_7$.
     Hence, $q$ is an inner vertex of $Q$.

    By assumption, $x$ is not adjacent to $b_1$ or $b_2$. Therefore, there exists a subpath $\hat{Q}$ of $b_1Qb_2$ such that $x$ is complete to the set of internal vertices of $\hat{Q}$ and anti-complete to the ends of $\hat{Q}$.
 By construction, $x$ is not adjacent to either $x_1$ or $x_3$.
 If $x$ has some neighbour in $P$, then there also exists a subpath $\hat{P}$ of $x_1Px_3$ such that $x$ is complete to the set of  internal vertices of $\hat{P}$ and anti-complete to its ends of $\hat{P}$. 
 If both $\hat{P}$ and $\hat{Q}$ have at least five vertices, this means that $\hat{P}, \hat{Q}$ is a suitable pair of $G$ with 
 $|V(\hat{P})|+|V(\hat{Q})|<|V(P)|+|V(Q)|$, a contradiction.
 If one of $\hat{P}, \hat{Q}$ has at most four vertices, we find that 
 $G$ has a small $\mathbb{H}_2$-model, another contradiction.
 It follows that $x$ has no neighbour in $P$.

Since $a_1\in V(P)$, there exists a vertex $y$ on the subpath $a_1Rx$, such that $y$ has some neighbour in $a_1Pa_2$. We choose such a vertex $y$ with minimum distance to $x$ on $R$. Note that since $a_1 \in V(P)$, such a $y$ must exist, and in particular $y \notin \{x,a_1\}$. 
    
First suppose that $y$ is a adjacent to two non-adjacent vertices $p_1$ and $p_2$ on $a_1Pa_2$. We set $X_1=\{p_1\}$, $X_2=\{y\}$, $X_3=\{p_2\}$,
$X_4 = V(yRx)\setminus \{y\}$, $X_5=\{x_5\}$, $X_6=V(Q)\setminus \{x_5,x_7\}$ (note $q\in X_6$)
and $X_7=\{x_7\}$ to obtain a small $\mathbb{H}_2$-model in $G$, a contradiction.
Hence, $y$ is adjacent to at most two vertices $d_1,d_2$ in $a_1Pa_2$. Further, if $d_1 \neq d_2$, then $d_1$ and $d_2$ are adjacent.
Suppose that the order of vertices on $P$ is $x_1, a_1, d_1, d_2, a_2, x_2$. Let $d_1'$ be the neighbour of $d_1$ closest to $x_1$, and let $d_2'$ be the neighbour of $d_2$ closest to $x_2$.
We set $X_1=\{d_1'\}$, $X_2=\{d_1,d_2\}$, $X_3=\{d_2'\}$,
$X_4 = V(yRx)$, $X_5=\{x_5\}$, $X_6=V(Q)\setminus \{x_5,x_7\}$ 
(note $q\in X_6$) and $X_7=\{x_7\}$ to obtain a small $\mathbb{H}_2$-model in $G$. Again, this is a contradiction. We conclude that $P'$ and $Q$ are mutually induced. 

Since $P,Q$ form a suitable pair, there exists a vertex $c$ that is a centre for $P,Q$. We claim that $c$ is also a centre for $P',Q$.
For a contradiction, suppose $c$ is not a centre for $P',Q$.
Because $c$ is the centre for $P,Q$, we find that $c$ is anti-complete to $\{x_1,x_3,x_5,x_7\}$ and complete to $\{a_1,a_2\}\cup (V(Q)\setminus \{x_3,x_5\})$. 
Since $c$ is not a centre for $P',Q$, this means that $c$ is not adjacent to some vertex $z\in V(R)$. We choose $z$ such that its distance to $a_1$ along $R$ is minimised. 
As $a_1\in V(x_1P'z)$, we find that $c$ has a neighbour in $V(x_1P'z)$.
Moreover, if $x_1P'z$ has at most four vertices, then $V(x_1P'z) \setminus \{x_1,z\}$ has size at most~$2$. 
In that case, setting $X_1=\{x_1\}$, $X_2= V(x_1P'z) \setminus \{x_1,z\}$, $X_3=\{z\}$,
$X_4 = \{c\}$, $X_5=\{x_5\}$, $X_6=V(Q)\setminus \{x_5,x_7\}$, $X_7=\{x_7\}$ yields a small $\mathbb{H}_2$-model in $G$, a contradiction. 
Hence, $x_1P'z$ contains at least five vertices, just like $Q$.
Moreover, $c$ is complete to the internal vertices of both $x_1P'z$ and $Q$, and $c$ is anti-complete to $\{x_1, z, x_5, x_7\}$. Therefore, $x_1P'z$ and $Q$ form a suitable pair. 

We note that $a_1Pa_2 \subseteq G_{\cal F}$.
Since $R$ was a shortest path from $a_1$ to $a_2$ in $G_{\cal F}$, it holds that $|V(R)|\leq |V(a_1Pa_2)|$ and thus 
$|V(P')| \leq |V(P)|$. As $z\in V(R)$, $z \neq x_3$, and thus $x_1P'z \subsetneq P'$. This means that $|V(x_1P'z)| < |V(P)|$. 
Hence, $x_1P'z, Q$ form a suitable pair of~$G$ that has the same frame as $P,Q$ but with $|V(x_1P'z)|+|V(Q)| < |V(P)|+|V(Q)|$, contradicting our assumption that $P,Q$ is minimum. It follows that $c$ is a centre for $P',Q$, and thus, $P',Q$ form a suitable pair of $G$, which is even minimum
as $|V(P')| \leq |V(P)|$.
\end{proof}

\begin{figure}
    \centering
    \includegraphics[width=0.35\linewidth,page=3]{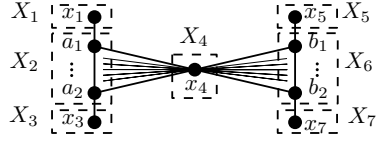}
    \caption{The $\mathbb{H}_2$-model ${\cal X}$ in $G$ from Lemma~\ref{lem-type-1or2}.}
    \label{fig:types}
\end{figure}

\noindent
We prove the following lemma.

\begin{lemma}\label{lem-type-1or2}
    Let $G$ be a graph with no small $\mathbb{H}_2$-model. For every minimum $\mathbb{H}_2$-model $\mathcal X$ in $G$ that maximises $|X_4|$, 
    the following four statements hold 
    (see also Figure~\ref{fig:types}):
\begin{itemize}    
    \item [(i)] $X_1=\{x_1\}$, $X_3=\{x_3\}$, $X_4=\{x_4\}$, $X_5=\{x_5\}$, $X_7=\{x_7\}$ for $x_1, x_3, x_4,x_5, x_7\in V(G)$;
    \item [(ii)] $G_2$ is a path from a unique $X_1$-attachment $a_1$ to a unique $X_3$-attachment $a_2$;
    \item [(iii)] $G_6$ is a path from a unique $X_5$-attachment $b_1$ to a unique $X_7$-attachment $b_2$; 
    \item [iv)]  $x_4$ is complete to $X_2 \cup X_6$. 
  \end{itemize}
\end{lemma}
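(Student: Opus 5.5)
We first take $\mathcal X$ bag-minimal. This is automatic, since $\mathcal X$ is minimum. Lemma~\ref{l:bag-min-h} then gives $|X_1|=|X_3|=|X_5|=|X_7|=1$, and it shows that $G_4$ is a path from the unique $X_2$-attachment $x$ in $X_4$ to the unique $X_6$-attachment $y$ in $X_4$. Because $G$ has no small $\mathbb{H}_2$-model, $|X_2|\geq 3$ and $|X_6|\geq 3$. The plan is to prove (ii) and (iii) first, using the maximality of $|X_4|$. We then use the absence of small models to get $|X_4|=1$, which completes (i), and finally use minimality to get (iv).

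\textbf{Steps (ii) and (iii).} Vertex $2$ has exactly the three neighbours $1,3,4$ in $\mathbb{H}_2$, so Lemma~\ref{l:general}-(iii) applies to $G_2$. The single-vertex case is excluded because $|X_2|\ge 3$. Suppose $X_2$ has a unique $X_4$-attachment $t$. This happens if $G_2$ is a tripod or triangle tripod, or a path with $t$ as an end.
\begin{itemize}
\item In the tripod case, we move the leg from $t$ up to, but excluding, the degree-$3$ vertex into $X_4$.
\item In the triangle-tripod case, we move the whole path $P_3$, including $u_3$, into $X_4$; the rest stays connected through the edge $u_1u_2$.
\item In the path case, we move a suitable end segment containing $t$ into $X_4$.
\end{itemize}
The remainder of $X_2$ stays connected and keeps its unique $X_1$- and $X_3$-attachments. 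The moved part is connected to $X_4$ through $t$ and touches only $X_2\cup X_4$, since it contains no $X_1$- or $X_3$-attachment. It also stays adjacent to $X_2$. The result is again a minimum model, now with larger $|X_4|$, which contradicts the choice of $\mathcal X$. Hence $X_2$ has at least two $X_4$-attachments. The ``in particular'' clause of Lemma~\ref{l:general}-(iii) then says that $G_2$ is a path from a unique $X_1$-attachment $a_1$ to a unique $X_3$-attachment $a_2$. The same argument applied to $X_6$ gives (iii).

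\textbf{Step (i): $|X_4|=1$.} Only $x$ has neighbours in $X_2$, and we know from the previous step that $x$ has at least two neighbours on the path $G_2$.
\begin{itemize}
\item Suppose the neighbours of $x$ on $G_2$ are at most two vertices, and adjacent if there are two; call them $d_1,d_2$. Build a model with $X_2'=\{d_1,d_2\}$. Let $X_1'$ be the $G_2$-neighbour of $d_1$ on the side of $a_1$, or $x_1$ if $d_1=a_1$. Define $X_3'$ symmetrically on the side of $a_2$. Keep all other bags. This is a small model, a contradiction.
\item Otherwise $x$ has two nonadjacent neighbours $p,q$ on $G_2$. If $|X_4|\ge 2$, set $X_1'=\{p\}$, $X_2'=\{x\}$, $X_3'=\{q\}$, $X_4'=X_4\setminus\{x\}$, and keep $X_5,X_6,X_7$. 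Since $x$ is the only vertex of $X_4$ seeing $X_2$, the vertices $p,q$ see nothing else, and this is a small model, again a contradiction.
\end{itemize}
So $|X_4|=1$, and (i) holds. Note that the first bullet also rules out $x_4$ having only one neighbour, or two consecutive neighbours, on $G_2$.

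\textbf{Step (iv).} This is the delicate step. Suppose some vertex of $G_2$ is not adjacent to $x_4$. If $a_1$ is such a vertex, let $p$ be the first neighbour of $x_4$ along $G_2$ from $a_1$. Replace $x_1$ by the predecessor $p^-$ of $p$, and shrink $X_2$ to $pG_2a_2$. This is a valid model because $p^-$ sees only $p$ in the new bags. It drops $x_1$, so it is smaller, a contradiction. Otherwise there are consecutive neighbours $p,q$ of $x_4$ on $G_2$ with non-neighbours between them; let $p^+$ be the successor of $p$. Take $X_2'=a_1G_2p$ and $X_3'=\{p^+\}$, and keep all other bags. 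This is a valid model: $p^+$ is not adjacent to $x_4$, and being an inner vertex of $X_2$ it is not adjacent to $X_5$, $X_6$ or $X_7$. It discards $qG_2a_2$ and $x_3$, so it is smaller, a contradiction.

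The main obstacle is checking in each exchange that no unwanted adjacencies appear. This relies on the uniqueness of the attachments along the paths $G_2$ and $G_6$. The same argument applied to $X_6$ completes (iv).
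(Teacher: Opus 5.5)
\textbf{The gap is in Step (ii), in the path case.} Suppose Lemma~\ref{l:general}-(iii) returns a path from the unique $X_1$-attachment $s$ to the unique $X_4$-attachment $t$. Then nothing is known about the $X_3$-attachments, and $t$ itself may be adjacent to $x_3$. In that case every end segment containing $t$ contains an $X_3$-attachment, so moving it into $X_4$ makes $X_4$ adjacent to $X_3$. Your sentence that the remainder ``keeps its unique $X_1$- and $X_3$-attachments'' assumes a uniqueness that this case does not give. When $t$ is not adjacent to $x_3$, your move does work: shift the segment from $t$ up to the vertex before the first $X_3$-attachment. Your case list also omits the case where $G_2$ is already a path between unique $X_1$- and $X_3$-attachments with $t$ in its interior, but there (ii) holds directly.

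\textbf{The gap cannot be closed, because the statement is false in exactly this configuration.} Let $G$ consist of:
an induced path $p_2p_1p_0$;
a vertex $x_3$ adjacent to $p_0,p_1,p_2$;
a vertex $x_1$ adjacent only to $p_2$;
a vertex $x_4$ adjacent to $p_0$ and to every vertex of an induced path $q_1q_2q_3$;
vertices $x_5,x_7$ adjacent only to $q_1$ and $q_3$, respectively;
and no other edges.
Then $X_1=\{x_1\}$, $X_2=\{p_0,p_1,p_2\}$, $X_3=\{x_3\}$, $X_4=\{x_4\}$, $X_5=\{x_5\}$, $X_6=\{q_1,q_2,q_3\}$, $X_7=\{x_7\}$ is an $\mathbb{H}_2$-model on all $11$ vertices.

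Every $\mathbb{H}_2$-model has $x_4\in X_4$:
\begin{enumerate}
\item The bags cannot fit into one $5$-vertex component of $G-x_4$.
\item If $x_4\in X_1\cup X_2\cup X_3$ (or symmetrically $X_5\cup X_6\cup X_7$), then $X_5\cup X_6\cup X_7$ lies in the component $\{x_1,p_2,p_1,x_3\}$ of $G-N[x_4]$. Its leaves must be $\{x_1\}$ and a subset of $\{p_1,x_3\}$. Every vertex that could enter $X_4$ lies in the triangle $\{p_0,p_1,x_3\}$ and so sees the second leaf.
\end{enumerate}

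With $x_4\in X_4$, the sets $X_1\cup X_2\cup X_3$ and $X_5\cup X_6\cup X_7$ lie in different components of $G-x_4$.
\begin{enumerate}
\item On the right, the leaves must be $\{x_5\}$ and $\{x_7\}$, so the middle bag contains $q_1,q_2,q_3$.
\item On the left, the leaves are $\{x_1\}$ and a subset of $\{p_1,x_3\}$. Their closed neighbourhoods cover the left side, so $X_4$ avoids it. Hence the middle bag contains the non-adjacent vertices $p_2$ (the only neighbour of $x_1$) and $p_0$ (the only left vertex with a neighbour outside the left side).
\end{enumerate}

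So $G$ has no small model and every model has at least $11$ vertices. The model above is therefore minimum and maximises $|X_4|$. Yet $x_3$ is complete to $X_2$, so (ii) fails, and $x_4$ misses $p_1$ and $p_2$, so (iv) fails.

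\textbf{The paper's own proof has the same hole.} There $R=\{p_2\}$, and the path $Z=x_4p_0p_1$ moved into $X_4$ puts the neighbours $p_0,p_1$ of $x_3$ into $X_4$, so the claimed model is invalid. Moreover, this $G$ has no induced matching with four edges, hence no frame. The algorithm of Theorem~\ref{th:detect} would therefore reject this yes-instance.

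Your Step (i) constructions and Step (iv) exchanges are sound once (ii) is available, but (ii) is exactly what cannot be proved.
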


\begin{proof}
Let ${\cal X}$ be a minimum $\mathbb{H}_2$-model $\mathcal X$ in $G$ that maximises $|X_4|$. As $\mathcal X$ is minimum, ${\cal X}$ is bag-minimal.
By Lemma~\ref{l:bag-min-h}-(i), we have $X_1=\{x_1\}$, $X_3=\{x_3\}$, $X_5=\{x_5\}$ and $X_7=\{x_7\}$ for some $x_1, x_3, x_5, x_7\in V(G)$.
By Lemma~\ref{l:bag-min-h}-(ii), $G_4$ is a path from a vertex $x$ to a vertex $y$.

Let $R$ be a shortest path in $G_2$ between some $X_1$-attachment vertex~$a_1$ and some $X_3$-attachment~$a_2$. 
 We claim that $x$ has a neighbour in~$R$. Otherwise, let $Z$ be a shortest path in $G[X_2\cup \{x\}]$ from $x$ to some vertex with a neighbour in~$R$. Replacing $X_2$ by~$R$ and $X_4$ by~$Z \cup X_4$ yields a minimum $\mathbb{H}_2$-model~${\cal X}'$ in~$G$ with~$|X_4'|>|X_4|$, contradicting our choice of~${\cal X}$. Moreover, $G_2 = R$, else replacing $X_2$ by $V(R)$ would yield a smaller $\mathbb{H}_2$-model in~$G$, another contradiction. The claim about $X_6$ holds similarly. Hence, we have proven (ii) and (iii).

Now suppose $|X_4|\geq 2$, so in particular $x\neq y$. If $x$ is not adjacent to $a_1$, then we replace $X_1$ by $\{a_1\}$ to find a smaller $\mathbb{H}_2$-model in $G$. Hence, $x$ is adjacent to $a_1$. For the same reason, $x$ is also adjacent to $a_2$, and $y$ is adjacent to $b_1$ and $b_2$. If $|X_2| \geq 3$, then we obtain a smaller $\mathbb{H}_2$-model in $G$ by deleting $x$ from $X_4$ and letting $X_2 = \{x\}$, $X_1 = \{a_1\}$ and $X_3 = \{a_2\}$, a contradiction. Hence, $X_2 = \{a_1, a_2\}$, in which case $G$ has a small $\mathbb{H}_2$-model, a contradiction.  Hence, $X_4= \{x\}$, which completes the proof of (i).

It remains to show (iv).
Suppose $x$ has a non-neighbour $z$ in~$X_2$. We choose $z$ such that its distance to $a_1$ along $X_4$ is minimised. Replacing $X_2$ by $V(a_1 X_2 z) \setminus \{z\}$ and $X_3$ by $\{z\}$ yields a smaller $\mathbb{H}_2$-model in $G$, a contradiction.  Hence, $x$ is complete to $X_2$, and by symmetry, to $X_6$. This proves~(iv).
\end{proof}

\noindent
 Recall that for a frame ${\cal F}=(x_1, a_1, a_2, x_3, x_5, b_1, b_2, x_7)$, the ${\cal F}$-reduced graph is the graph $G_{\cal F} =G- (N[x_1, x_3, x_5, b_1, b_2, x_7] \setminus \{a_1, a_2\})$.
 We now also define the {\it reversed} ${\cal F}$-reduced graph of ${\cal F}$ as 
 $G_{\cal F}^r =G- (N[x_1, x_3, x_5, a_1, a_2, x_7] \setminus \{b_1, b_2\})$.

We are now ready to present our algorithm. In its correctness proof, we use the shortest path detector technique of \cite{chudnovsky.c.l.s.v:reco}, namely in two consecutive applications of Lemma~\ref{lem-replacement}.

\begin{theorem}
\label{th:detect}
$\mathbb{H}_2$-{\sc Induced Minor} is polynomial-time solvable.
\end{theorem}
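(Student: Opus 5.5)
The algorithm first runs the $O(n^9)$ procedure of Lemma~\ref{lem-small-model}. If $G$ has a small $\mathbb{H}_2$-model, we accept. Otherwise we know $G$ has no small model, so Lemmas~\ref{lem-replacement} and~\ref{lem-type-1or2} apply.

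By Lemma~\ref{lem-type-1or2}, if $G$ has $\mathbb{H}_2$ as an induced minor, then a minimum model maximising $|X_4|$ has the following shape. The bags $X_1,X_3,X_4,X_5,X_7$ are singletons. $G_2$ is a path from $a_1$ to $a_2$ and $G_6$ is a path from $b_1$ to $b_2$. The vertex $x_4$ is complete to $X_2\cup X_6$. Set $P=x_1a_1G_2a_2x_3$ and $Q=x_5b_1G_6b_2x_7$. The model conditions make $P,Q$ mutually induced induced paths, and $x_4$ is a centre for $P,Q$ (it is anti-complete to $x_1,x_3,x_5,x_7$). If $|X_2|\le 2$ or $|X_6|\le 2$ the model would be small, so $P$ and $Q$ each have at least five vertices. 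Hence $P,Q$ is a suitable pair. Conversely, as noted before Lemma~\ref{lem-replacement}, any suitable pair with a centre yields an $\mathbb{H}_2$-model. So it suffices to decide whether a suitable pair exists.

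To detect a suitable pair, we enumerate all $O(n^8)$ frames ${\cal F}=(x_1,a_1,a_2,x_3,x_5,b_1,b_2,x_7)$ of pairwise distinct vertices. For each frame:
\begin{enumerate}
\item Compute a shortest $a_1$--$a_2$ path $R$ in $G_{\cal F}$ and set $P'=x_1a_1Ra_2x_3$.
\item Compute a shortest $b_1$--$b_2$ path $S$ in the reversed reduced graph built from $P'$, i.e.\ $G-(N[V(P')\cup\{x_5,x_7\}]\setminus\{b_1,b_2\})$. Using all of $N[V(P')]$ (not only $N[a_1,a_2,x_1,x_3]$) guarantees mutual inducedness. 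Set $Q'=x_5b_1Sb_2x_7$.
\item Check directly in polynomial time that $P',Q'$ are mutually induced induced paths on at least five vertices. Then look for a vertex $c$ complete to the inner vertices and anti-complete to the four ends.
\end{enumerate}
We accept if some frame passes the check. Soundness is immediate, since any accepted output is a genuine suitable pair.

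For completeness, take a minimum suitable pair $P,Q$ with frame ${\cal F}$. Lemma~\ref{lem-replacement} shows that $P',Q$ is again a minimum suitable pair. Now apply Lemma~\ref{lem-replacement} a second time with the roles of $P$ and $Q$ swapped; the lemma is symmetric in the two paths. This shows that replacing $Q$ by a shortest $b_1$--$b_2$ path in the reversed reduced graph keeps a minimum suitable pair.

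The main obstacle is making this second application match what the algorithm computes. The swapped lemma is stated for the reduced graph $G^r_{\cal F}$, which deletes $N[x_1,x_3,a_1,a_2,x_5,x_7]$ but not the neighbourhood of the inner vertices of $P'$. I would either have the algorithm use exactly $G^r_{\cal F}$ and then rely on the proof of Lemma~\ref{lem-replacement} for mutual inducedness, or run the algorithm with my stronger deletion. In the second case I would argue that any shortest path in $G^r_{\cal F}$ produced by the lemma already avoids $N(V(P'))$, since the lemma guarantees that the pair is mutually induced. It is then also a shortest path in the smaller graph, because that graph is a subgraph of $G^r_{\cal F}$ and the distance can only grow. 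I would check carefully that the path computed in the smaller graph still satisfies the hypotheses of the lemma, or else just adopt $G^r_{\cal F}$ exactly as the paper's definition suggests. The running time is $O(n^8)$ frames times polynomial work per frame, so polynomial overall.
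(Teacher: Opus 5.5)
Your proposal is correct and follows the paper's proof: the small-model test, enumeration of all $O(n^8)$ frames, and two applications of Lemma~\ref{lem-replacement}, the second with the roles of $P$ and $Q$ swapped, which is exactly why the paper uses $G^r_{\cal F}$. Your variant with the stronger deletion also works and closes the point you left open: every shortest $b_1$--$b_2$ path of $G^r_{\cal F}$ avoids $N[V(P')]$ by the lemma, so the two distances agree, and therefore any shortest path computed in your smaller graph is itself a shortest path of $G^r_{\cal F}$ to which Lemma~\ref{lem-replacement} applies.
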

\begin{proof}
We describe an $O(n^9)$-time algorithm.
First apply the $O(n^9)$ time algorithm of Lemma~\ref{lem-small-model} to decide whether the input $G$ contains a small $\mathbb{H}_2$-model~${\cal X}$.
If so, output ${\cal X}$. Else, consider all $O(n^8)$ $8$-tuples $(x_1, a_1, a_2, x_3, x_5, b_1, b_2, x_7)$ that induce a subgraph of $G$ with exactly the four edges $x_1a_1$, $a_2x_3$, $x_5b_1$, $b_2x_7$. For each $8$-tuple, do as follows. Compute a shortest path~$R$ from $a_1$ to $a_2$ in $G_{\cal F}$ and a shortest path $R'$ from $b_1$ to $b_2$ in $G_{\cal F}^r$. Let $P = x_1 a_1 R a_2 x_3$ and $Q = x_5 b_1 R' b_2 x_7$. Check if $P$ and $Q$ are mutually induced and if there exists a vertex $c$ in $G- N[x_1, x_3, x_5, x_7]$ that has a neighbour in $P$ as well as a neighbour in $Q$. If so, output the $\mathbb{H}_2$-model~${\cal X}$ in $G$ with $X_1 = \{x_1\}$, $X_3 = \{x_3\}$, $X_5 = \{x_5\}$, $X_7 = \{x_7\}$, $X_4= \{c\}$, $X_2= V(R)$ and $X_7= V(R')$. After considering all tuples, output no and stop.

As processing each $8$-tuples takes $O(n)$ time and there are $O(n^8)$, our algorithm runs in time $O(n^9)$. We now prove correctness. It is readily seen that any output ${\cal X}$ is indeed an $\mathbb{H}_2$-model of $G$. Now suppose $G$ contains $\mathbb{H}_2$ as an induced minor. We will show that our algorithm will find an $\mathbb{H}_2$-model in $G$.

If $G$ contains $\mathbb{H}_2$ as an induced minor, then there is a minimum $\mathbb{H}_2$-model $\mathcal{X}$ in $G$ that maximises $X_4$. As $G$ contains no small $\mathbb{H}_2$-model, we can apply Lemma~\ref{lem-type-1or2}. It follows that

    \begin{itemize}
        \item [(i)] $X_1=\{x_1\}$, $X_3=\{x_3\}$, $X_4=\{x_4\}$, $X_5=\{x_5\}$, $X_7=\{x_7\}$ for $x_1, x_3, x_4,x_5, x_7\in V(G)$;
    \item [(ii)] $G_2$ is a path from a unique $X_1$-attachment $a_1$ to a unique $X_3$-attachment $a_2$;
    \item [(iii)] $G_6$ is a path from a unique $X_5$-attachment $b_1$ to a unique $X_7$-attachment $b_2$; 
    \item [iv)]  $x_4$ is complete to $X_2 \cup X_6$ (but anti-complete to $\{x_1,x_3,x_5,x_7\}$). 
\end{itemize} 

\noindent
Hence, $P = x_1a_1G_2a_2x_3$ and $Q = x_5b_1G_6b_2x_7$ are mutually induced paths with frame $(x_1, a_1, a_2, x_3, x_5, b_1, b_2, x_7)$ and centre $x$. That is, $P, Q$ form a suitable pair, which is minimum, as
$\mathcal{X}$ is minimum.
When our algorithm considers the $8$-tuple $(x_1, a_1, a_2, x_3, x_5, b_1, b_2, x_7)$, it computes a shortest path~$R$ from $a_1$ to $a_2$ in $G_{\cal F}$ and a shortest path $R'$ from $b_1$ to $b_2$ in $G_{\cal F}^r$. Let $P = x_1 a_1 R a_2 x_3$ and $Q = x_5 b_1 R' b_2 x_7$.
As $P, Q$ form a minimum suitable pair of paths, the same holds for $P',Q$ by Lemma~\ref{lem-replacement}.
As $P', Q$ form a minimum suitable pair of paths, the same holds for $P',Q'$, again by Lemma~\ref{lem-replacement}.
As $P',Q'$ form a suitable pair, 
the pair $P', Q'$ has some centre $x_4$.
This means that $x_4$ is in $G-N[x_1,x_3,x_5,x_7]$ and has a neighbour in $R$ and a neighbour in $R'$. Hence, our algorithm will find $x_4$ or a vertex with the same properties as~$x_4$, and outputs an $\mathbb{H}_2$-model of $G$. We conclude that our algorithm is correct.
\end{proof}

\section{Windmills}\label{s-np}

For positive integers $a$, $b$, $c$ and $d$, a graph $G$ is an \emph{$(a, b, c, d)$-windmill} if $G$ contains two mutually induced paths $P= u_1  \dots u_k$ on  $k\geq a+b+2$ vertices and $Q= v_1  \dots v_\ell$ on $\ell\geq c+d+2$ vertices, plus a vertex $z$ called the \emph{centre} that is anti-complete to $X = \{u_1, \dots, u_a\} \cup \{u_{k-b+1}, \dots, u_k\} \cup \{v_1, \dots, v_c\} \cup \{v_{\ell-d+1}, \dots, v_\ell\}$ and complete to $(V(P)\cup V(Q)) \setminus X$. It follows that $z$ has degree at least $4$. 
For fixed $a,b,c,d$, the {\sc $(a, b, c, d)$-Windmill} problem is to decide if a given graph contains an $(a, b, c, d)$-windmill. 

Detecting a windmill is a problem similar to the main subroutine in the proof of Theorem~\ref{th:detect}. However, some of its variants are in general \NP-complete. We rely on the following \NP-completeness results.  By {\sc 2-in-a-hole} we mean the problem whose instance is a graph~$G$ with two prescribed vertices $x$ and $y$ of degree~2, and whose question is whether there is some chordless cycle of $G$ containing $x$ and $y$. By {\sc Induced 2-Disjoint Paths} we mean the problem whose instance is a graph $G$ with four prescribed vertices $x'$, $x''$, $y'$, $y''$ and whose question is whether $G$ contains two mutually disjoint paths from $x'$ to $y'$ and $x''$ to $y''$. 
A \emph{hub} in a graph is a vertex that has at least three neighbours of degree at least~3. 

\begin{theorem}[see \cite{diotTaTr:13}]
\label{th:diot}
 The problems  {\sc 2-in-a-hole} and {\sc Induced 2-Disjoint Paths} are \NP-complete, even when restricted to hub-free instances. 
\end{theorem}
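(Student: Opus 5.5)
Since this statement is quoted from~\cite{diotTaTr:13}, I would reprove it by redoing a Bienstock-style reduction from 3-SAT. I would then check that the graphs it produces are hub-free, and obtain {\sc Induced 2-Disjoint Paths} from {\sc 2-in-a-hole} by a degree-non-increasing transformation.

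\textbf{Step 1 (the gadget graph).} Let $\phi$ be a 3-SAT formula with variables $v_1,\dots,v_n$ and clauses $C_1,\dots,C_m$. I would build a graph $G_\phi$ containing the prescribed vertices $x$ and $y$ of degree~$2$. The graph is a ``ladder'' of variable gadgets followed by clause gadgets, arranged in series from $x$ to $y$.
\begin{itemize}
\item The variable gadget for $v_i$ consists of two internally disjoint long paths, a true path and a false path, between two cut vertices of the ladder.
\item The clause gadget for $C_j$ consists of three parallel long paths, one for each literal.
\end{itemize}
A hole through $x$ and $y$ must go out through the variable section and return through the clause section. It must therefore pick exactly one branch in each gadget. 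Consistency is enforced by adding single edges (chords) between an internal vertex of the branch for a literal $\ell$ in a clause gadget and an internal vertex of the branch for $\overline{\ell}$ in the variable gadget. I would verify in two directions.
\begin{itemize}
\item A satisfying assignment yields a chordless cycle: choose the true branches, and in each clause choose a branch of a satisfied literal; no added edge joins two chosen branches.
\item Conversely, a hole through $x,y$ uses one branch per gadget, and the added chords force the chosen clause literals to be consistent with the chosen variable branches.
\end{itemize}
This is standard; the only care needed is that every branch is long enough that no unintended chord or shortcut appears.

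\textbf{Step 2 (hub-freeness).} Here is where the restriction is earned, and I expect it to be the main obstacle. It requires choosing the construction so that vertices of degree at least~$3$ are far apart. Each branch is subdivided enough that the following holds.
\begin{itemize}
\item Every endpoint of a consistency edge lies in the interior of a branch. It has exactly three neighbours: its two path neighbours, which have degree~$2$, and the other endpoint of that edge.
\item Distinct consistency edges attach to vertices at distance at least~$3$ apart along a branch. If several literals need edges to the same variable branch, I would use distinct well-separated vertices.
\item The junction vertices where branches split (degree~$2$ or~$3$) have only degree-$2$ neighbours, because the first vertex of each branch is a subdivision vertex.
\end{itemize}
Hence every vertex has at most one neighbour of degree at least~$3$, so there is no hub. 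The risk to check is that subdividing branches does not create new holes through $x,y$ or destroy intended ones. Since we subdivide only edges inside branches, and chords are the explicit added edges, the argument of Step~1 is unchanged.

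\textbf{Step 3 ({\sc Induced 2-Disjoint Paths}).} Given a hub-free instance $(G,x,y)$ of {\sc 2-in-a-hole}, let $N(x)=\{x',x''\}$ and $N(y)=\{y',y''\}$. Then $G$ has a hole through $x$ and $y$ if and only if $G-\{x,y\}$ has mutually induced paths from $x'$ to $y'$ and from $x''$ to $y''$, or from $x'$ to $y''$ and from $x''$ to $y'$ (with the obvious care about adjacencies among $x',x'',y',y''$). This gives a Turing reduction using two instances. Alternatively, I would adapt the Step~1 construction directly, so that the two sides of the ladder become the two required paths, to get a many-one reduction. Deleting vertices never increases degrees, so hub-freeness is preserved. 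Both problems are clearly in \NP, which completes the plan.
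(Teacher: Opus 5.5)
There is a genuine gap in Step~1: the converse direction is false for the ladder you describe. Lengthening the branches cannot repair it, because the unwanted shortcuts are the consistency edges themselves.

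Name the end junctions $s_V,e_V$ of the variable section and $s_C,e_C$ of the clause section, so that $N(x)=\{s_V,e_C\}$ and $N(y)=\{e_V,s_C\}$. Take the unsatisfiable formula $(\neg v_1\vee v_2\vee v_3)\wedge(v_2\vee v_2\vee v_2)\wedge(\neg v_2\vee\neg v_2\vee\neg v_2)$. Consider the cycle that:
\begin{itemize}
\item starts at $x$ and goes to $s_V$;
\item follows the true branch of $v_1$ up to its consistency edge, crosses into the $\neg v_1$-branch of $C_1$, and follows that branch back to $s_C$;
\item continues with $y$ and $e_V$, and follows the false branch of $v_3$ backwards up to its consistency edge;
\item crosses into the $v_3$-branch of $C_1$ and follows it forward to the junction between $C_1$ and $C_2$;
\item passes through one branch of $C_2$ and one branch of $C_3$ to $e_C$, and returns to $x$.
\end{itemize}
The only consistency edges meeting the used parts of branches are the two that are traversed. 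The branches of $C_2$ and $C_3$ are joined only to the gadget of $v_2$, which the cycle never enters. By your separation conventions the junctions create no chords. So this is a hole through $x$ and $y$, although the formula is unsatisfiable. A hole can thus enter a clause gadget from both sides and skip variable gadgets entirely, which makes every clause on a skipped variable free. Your assertion that a hole ``must pick exactly one branch in each gadget'' is exactly the step that fails.

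Excluding such detours is the non-trivial part of any correct reduction of this kind; Bienstock's gadgets and correctness argument are built around it. Your hub-freeness check in Step~2 is carried out only for the flawed ladder, so it would have to be redone for a construction that actually works. The paper does not reprove any of this. It cites \cite{diotTaTr:13} for {\sc 2-in-a-hole} on hub-free instances and observes that the same reduction proves the claim for {\sc Induced 2-Disjoint Paths} verbatim.

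Step~3 also falls short on its own terms. Reducing {\sc 2-in-a-hole} to two instances of {\sc Induced 2-Disjoint Paths} is a Turing reduction. It gives hardness only under Cook reductions, not \NP-completeness in the usual many-one sense. The direct many-one variant you mention would need its own correctness proof. For your ladder it fails in the same way: detours through consistency edges also yield mutually induced $s_V$--$e_V$ and $e_C$--$s_C$ paths for unsatisfiable formulas.
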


\noindent
Observe that in \cite{diotTaTr:13}, only {\sc $2$-in-a-Hole} is proven to be NP-complete, but the proof that is given also applies (with no modification) to {\sc Induced $2$-Disjoint Paths}.  Also note that solving {\sc $2$-in-a-Hole} can be performed by solving two instances of {\sc Induced $2$-Disjoint Paths}, so it is an easier problem.
  
\begin{theorem}\label{t-np}
    If $a$, $b$, $c$ and $d$ are four positive integers, no three of which are equal, then {\sc $(a, b, c, d)$-Windmill} is \NP-complete. 
\end{theorem}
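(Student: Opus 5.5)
Membership in \NP{} is immediate: a certificate is the pair of paths $P,Q$ together with the centre $z$, which can be checked in polynomial time. For hardness I would reduce from {\sc Induced 2-Disjoint Paths} restricted to hub-free instances (Theorem~\ref{th:diot}). Take an instance $(G,x',x'',y',y'')$. Add a new vertex $z$ adjacent to every vertex of $G$. Then attach pendant paths of prescribed lengths at the four terminals. At $x'$ attach a path with $a$ new vertices, at $y'$ one with $b$ new vertices, at $x''$ one with $c$ new vertices, and at $y''$ one with $d$ new vertices; all of these new vertices are non-adjacent to $z$. Call the result $G'$.

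\textbf{Forward direction.} If $G$ has two mutually induced paths $x'\!-\!y'$ and $x''\!-\!y''$, extending them by the pendant paths gives $P$ and $Q$. Their first $a$ (last $b$, resp.\ first $c$, last $d$) vertices are exactly the pendant vertices, and $z$ is complete to the remaining vertices, all of which lie in $G$. So $G'$ contains an $(a,b,c,d)$-windmill. If some path in $G$ is a single vertex the length requirements $k\ge a+b+2$ could fail; I would avoid this by first subdividing a little near the terminals, or by assuming the terminals are distinct and non-adjacent, which the source construction allows.

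\textbf{Reverse direction (main obstacle).} Given a windmill in $G'$, I must show that its centre is $z$ and that its paths use the pendant paths as their anticomplete end segments, so that the middles yield the induced disjoint paths in $G$.

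The centre has at least four neighbours on each of two mutually induced paths, and its neighbourhood restricted to $P\cup Q$ consists of two long induced subpaths. If the centre is some $w\in V(G)$, its neighbours in $P\cup Q$ can include $z$ at most once. So $w$ would need many neighbours of degree at least 3 in $G$, or would force a structure contradicting hub-freeness. This is where the hub-free hypothesis is used; to make it bite, I would subdivide edges of $G$ or pad it appropriately. The pendant vertices have degree at most 2, so they cannot be centres.

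Once the centre is $z$, the vertices of $P\cup Q$ not adjacent to $z$ are exactly pendant vertices. Hence each end segment of length $a,b,c,d$ is an initial piece of a pendant path. A path can only leave a pendant path through its terminal, so the end segments are whole pendant paths glued at distinct terminals.

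The hypothesis that no three of $a,b,c,d$ are equal is used here. It forces the four segment lengths to match the four pendant paths in a way that pairs $x'$ with $y'$ on one path and $x''$ with $y''$ on the other, rather than the crossed pairing $x'$ with $x''$. If necessary I would choose the pendant lengths as a relabelled assignment of $a,b,c,d$ so that the crossed pairing is length-incompatible; with at most two equal values, some assignment achieves this. The inner parts of $P$ and $Q$ then give the required paths in $G$. This case analysis on centre location and length matching is the step I expect to need the most care.
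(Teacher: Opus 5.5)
\textbf{Gap: the relabelling claim is false.} A single reduction from {\sc Induced 2-Disjoint Paths} only works when the value sets $\{a,b\}$ and $\{c,d\}$ are disjoint. Your claim that relabelling the pendant lengths can always make the unwanted pairings length-incompatible is false, so the reverse direction fails in the remaining cases.

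Your argument that the four end segments must be exactly the four pendant paths is fine: each segment lies in a distinct pendant path, and the multisets of lengths agree. But there are two unwanted pairings, not one. Besides $\{x',x''\},\{y',y''\}$ there is also $\{x',y''\},\{x'',y'\}$. With lengths $a,b,c,d$ at $x',y',x'',y''$:
\begin{itemize}
\item the first is length-compatible iff $a=d$ or $b=c$;
\item the second is length-compatible iff $a=c$ or $b=d$.
\end{itemize}
Permuting the lengths among the terminals only permutes the three pairings. So the number of length-compatible pairings does not depend on the assignment. Take $(a,b,c,d)=(1,2,1,2)$ or $(1,2,1,3)$, where no three values are equal. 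Every assignment either makes the intended pairing incompatible or also makes a second pairing compatible. Then $G'$ has a windmill iff $G$ has mutually induced paths for one of two pairings, which is not the {\sc Induced 2-Disjoint Paths} question.

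\textbf{Missing idea: reduce from {\sc 2-in-a-hole} in these cases.} The paper does this for $\{a,b\}=\{c,d\}$. It uses the fact that a hole through two degree-$2$ vertices $x,y$, once $x$ and $y$ are deleted, splits into mutually induced paths realising either $\{x'y',x''y''\}$ or $\{x'y'',x''y'\}$.

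Whenever some value occurs in both $\{a,b\}$ and $\{c,d\}$ (say $a=c$), put that value at both neighbours $x',x''$ of $x$, and the other two values at $y',y''$. The compatible pairings are then exactly those two. The pairing $\{x'x'',y'y''\}$ is excluded because no three of $a,b,c,d$ are equal. {\sc Induced 2-Disjoint Paths} is the right source problem only when $\{a,b\}\cap\{c,d\}=\emptyset$, for example when all four values are distinct or when $a=b\neq c=d$.

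\textbf{Minor point.} The centre is only guaranteed two neighbours on each path (four in total), not four on each. Your hub-freeness argument for locating the centre at $z$ has to work with that weaker count.
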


\begin{proof}
    We first address the case when $\{a, b\} = \{c, d\}$, say $a=c$, $b=d$ up to symmetry (so $a\neq b$ since no three integers among $a$, $b$, $c$ and $d$ can be equal). We consider an instance  $(H, x, y)$ of {\sc  2-in-a-hole} and denote by $x'$, $x''$ and $y'$, $y''$ the neighbours of $x$ and $y$ respectively.  
    
    Let us build a graph $G$ from $H$ by removing $x$ and $y$, adding a vertex $z$ adjacent to all vertices known so far, four paths $x'_1\dots x'_a$,  $y'_1\dots y'_b$, $x''_1\dots x''_c$ and  $y''_1\dots y''_d$ and the four edges $x'x'_a$, $y'y'_b$, $x''x''_c$ and $y''y''_d$.  We claim that $H$ contains a chordless cycle containing $x$ and $y$ if and only if $G$ contains an $(a, b, c, d)$-windmill. 

    If $H$ contains a chordless cycle containing  $x$ and $y$, then this cycle must contain either two mutually induced paths from $x'$ to $y'$ and from $x''$ to $y''$, or two mutually induced paths from $x'$ to $y''$ and from $x''$ to $y'$.  In either case, these paths yield an $(a, b, c, d)$-windmill in~$G$.  Conversely, if some $(a, b, c, d)$-windmill exists in~$G$, it must be centred at $z$ since $H$ is hub-free.  Hence, it must contain the four paths $x'_1\dots x'_a$,  $y'_1\dots y'_b$, $x''_1\dots x''_c$ and  $y''_1\dots y''_d$ and either two mutually induced paths from $x'$ to $y'$ and from $x''$ to $y''$, or two mutually induced paths from $x'$ to $y''$ and from $x''$ to $y'$. Note that mutually induced paths from $x'$ to $x''$ and $y'$ to $y''$ cannot manifest as a solution since $a\neq b$.  In either case, these two paths yield a chordless cycle containing $x$ and $y$ in $H$.

    We now address the case where $a = b$ and $c =d$ (so $a\neq c$).  The reduction to the detection of a windmill is entirely similar, except that now we prove that $G$ contains an $(a, b, c, d)$-windmill if and only if $H$ contains two mutually disjoint paths from $x'$ to $y'$ and $x''$ to $y''$ (so we rely on the \NP-completeness of {\sc Induced 2-Disjoint Paths}). 

    The remaining cases follow in the same fashion (from the \NP-completeness of {\sc Induced 2-Disjoint Paths}).
\end{proof}

\noindent
One may observe that according to our proof, some other (possibly meaningful) variants of detecting a windmill can also be shown to be \NP-complete, but we are not sure there is sufficient motivation to list all such possible problems (see also Section~\ref{s-con} for some further discussion).

\section{Conclusions}\label{s-con}

By resolving the three open cases of graphs on five vertices and 
one long-standing remaining case of a tree on seven vertices, we settled the complexity of {\sc $H$-Induced Minor} for all graphs $H$ on five vertices and for all forests $H$ on at most seven vertices. We finish our paper with some natural open problems. 

An interesting case, posed as an open problem by Fiala et al.~\cite{FKP12},
is where $H$ is the double star in which both centre vertices have exactly three leaves. We note that the polynomial-time algorithm in~\cite{FKP12} for the case where $H$ is a double star in which one of the centre vertices has at most two leaves cannot be used.
Another interesting open case on six vertices, which follows naturally from the result of Bousquet et al.~\cite{BDDHMPT26} for $K_{2,3}$-{\sc Induced Minor}, is the graph $H=K_{3,3}$: the complete bipartite graph with three vertices on both sides. 

We note that our reduction in Theorem~\ref{t-np} does not produce any \NP-completeness result when at least three numbers among $a$, $b$, $c$ and $d$ are equal.
For instance, the case $a=b=c=d=1$ is still open. The problem is that in this case, mutually induced  paths from $x'$ to $x''$ and  from $y'$ to $y''$ in our reduction yield a windmill.  So, we have no ``anchor''  to force the windmill to go from the $x$-zone to the $y$-zone of $H$.

We give one more related open problem: what is the complexity of  {\sc $4$-in-$2$-Paths}? This problem was introduced in~\cite{Pa11}
and has as input a graph $G$ with four prescribed vertices $x$, $x'$, $y'$, $y''$.  The question is whether $G$ contains two mutually disjoint paths $P$ and $Q$, such that all ends of $P$ and $Q$ are in $\{x', x'', y', y''\}$. If {\sc $4$-in-$2$-Path} is \NP-complete for  hub-free graphs, then {\sc $(a, b, c, d$)-Windmill} would be \NP-complete for all $a$, $b$, $c$, $d$.


\bibliographystyle{plainurl}
\bibliography{inducedminor}
\end{document}